\documentclass[11pt,a4paper,twosides]{amsart}
\usepackage{amssymb,amsmath,amsthm,mathrsfs,graphicx,xcolor}
\usepackage{amsrefs}
\usepackage{leftidx}
\usepackage{hyperref}

\theoremstyle{plain}
\newtheorem{theorem}{Theorem}[section]

\newtheorem{lemma}[theorem]{Lemma}

\newtheorem{proposition}[theorem]{Proposition}
\theoremstyle{remark}
\newtheorem{remark}[theorem]{Remark}

\allowdisplaybreaks[4]

\numberwithin{equation}{section}

\newcommand{\C}{\mathbb{C}}
\newcommand{\R}{\mathbb{R}}

\newcommand{\F}{\mathcal{F}}

\renewcommand{\Im}{\operatorname{Im}}
\renewcommand{\Re}{\operatorname{Re}}

\newcommand{\Jbr}[1]{\left\langle #1 \right\rangle}

\def\({\left(}
\def\){\right)}
\def\<{\left\langle}
\def\>{\right\rangle}
\def\le{\leqslant}
\def\ge{\geqslant}

\def \pa{\partial}

\def \F{\mathcal{F}}

\newcommand{\eps}{\varepsilon}

\DeclareMathOperator{\sn}{sn}
\DeclareMathOperator{\cn}{cn}
\DeclareMathOperator{\dn}{dn}

\DeclareMathOperator{\am}{am}

\newcommand{\todayd}{\the\year/\the\month/\the\day}

\theoremstyle{definition}

\renewcommand{\phi}{\varphi}
\renewcommand{\epsilon}{\varepsilon}
\renewcommand{\tilde}{\widetilde}

\begin{document}
\title[Quadratic NLS system under mass-resonance in 2D]
{Modified scattering type asymptotic behavior\\ for a quadratic nonlinear Schr\"odinger system\\ under the mass-resonance condition in two dimensions}

\author{Satoshi Masaki}
\address{Department of Mathematics, 
Faculty of Science, Hokkaido University,
Kita 10, Nishi 8, Kita-Ku, Sapporo, Hokkaido, 060-0810, Japan}
\email{masaki@math.sci.hokudai.ac.jp}


\author{Kota Uriya}
\address{Department of Applied Mathematics, Faculty of Science, 
Okayama University of Science, Okayama, 700-0005, Japan}
\email{uriya@ous.ac.jp}

\keywords{Nonlinear Schr\"{o}dinger equation, Asymptotic behavior of solutions, 
Long-range scattering, mass-resonance, Jacobi elliptic function, elliptic integrals}
\subjclass[2020]{Primary 35Q55, Secondary 34A05, 34A34, 35B40}

\maketitle

\begin{abstract}
We study a system of nonlinear Schr\"odinger equations under the mass-resonance condition and provide a complete description of the asymptotic behavior of small solutions in two space dimensions.
Our analysis is based on a detailed study of an associated system of ordinary differential equations governing the asymptotic profile. We establish a phase-amplitude representation for this ODE with arbitrary initial data, where the amplitude is expressed explicitly in terms of Jacobi elliptic functions and the phase is given by elliptic integrals of the third kind.
As a consequence, we obtain a fully explicit characterization of the asymptotic profile for the original PDE. In particular, the long-time behavior is described by a modified-scattering-type profile whose profile evolves according to the above integrable structure.
While elliptic-function-type asymptotics were previously constructed for special solutions in final-state problems, the present work provides the first complete characterization,
in two space dimensions, of asymptotic dynamics associated with arbitrarily small initial data through elliptic-function-type profiles.
\end{abstract}

\section{Introduction}

\subsection{Background and problem}

In this paper, we study the long-time behavior of solutions to quadratic nonlinear Schr\"odinger systems in two spatial dimensions. 
A fundamental feature of such systems is that their asymptotic behavior is strongly influenced by resonance structures in the nonlinear interaction.

As a primary example, we consider the system
\begin{equation}\label{E:NLS}
	\left\{
	\begin{aligned}
	&i \partial_t u_1 + \frac1{2m_1}\partial_x^2 u_1 = u_2^2, \\
	&i \partial_t u_2 + \frac1{2m_2} \partial_x^2 u_2 = \overline{u_2} u_1,\\
	\end{aligned}
	\right.
\end{equation}
for $(t,x)\in \R \times \R^2$, where $m_1,m_2>0$ under the initial condition
\begin{equation}\label{E:IC}
	u_j(0)= u_{j,0}  \in H^{0,2}(\R^2),
\end{equation}
where $H^{0,2}(\R^2)$ is the weighted $L^2$ space. 
It is well known that the qualitative behavior of \eqref{E:NLS} depends crucially on the mass ratio. 
In particular, the mass-resonance condition
\begin{equation}\label{E:MR}
	m_1 = 2m_2
\end{equation}
aligns the nonlinear interaction with the characteristic phase
of the linear evolution.
As a consequence, the interaction becomes phase-resonant,
producing long-range effects in the asymptotics.

The system \eqref{E:NLS} arises from physical models
and has been studied extensively in various spatial dimensions.
It possesses the symmetry that if $(u_1(t),u_2(t))$
is a solution, then so is
$(e^{2i\theta}u_1(t),e^{i\theta}u_2(t))$
for any $\theta \in \R$.
Under the resonance condition \eqref{E:MR},
this symmetry becomes compatible with the dispersive phase
of the linear evolution; in particular,
\eqref{E:MR} is the necessary and sufficient condition
for Galilean invariance.
For related results on \eqref{E:NLS},
see
\cite{GMXZ,HaMa,NoPa,OU2013,IKN2019,IKN2020,IOU,HOT,HIN,Ham2018}
and references therein.

In two spatial dimensions, quadratic nonlinearities are critical, and under the resonance condition \eqref{E:MR}, nonlinear effects may persist in the large-time behavior. 
Such long-range effects is already observed for single equations with gauge-invariant nonlinearities, where logarithmic phase corrections appear (see, e.g., \cite{HaNa,Oz,GiOz}).
For systems such as \eqref{E:NLS}, the situation is more intricate due to the interaction between components.

A major development in the analysis of such systems is due to Katayama--Sakoda, who established a general framework reducing the asymptotic behavior of solutions to that of an associated reduced ODE system. 
In the case of \eqref{E:NLS} under the resonance condition \eqref{E:MR}, the profile is governed by
\begin{equation}\label{E:ODE}
	\left\{
	\begin{aligned}
	&i \partial_\tau A_1 = A_2^2, \\
	&i \partial_\tau A_2 = \overline{A_2} A_1.
	\end{aligned}
	\right.
\end{equation}
Thus, the asymptotic analysis naturally splits into a PDE part, which is treated in a unified manner, and an ODE part describing the effective dynamics.

While the integrability of such reduced systems is classical in the physics literature,
particularly in the context of nonlinear optics and three-wave interactions
(see, e.g., \cite{ABDP}),
the emphasis there has been primarily on the evolution of intensities
and energy transfer between waves.
In those works, the dynamics is typically described in terms of
real-valued quantities such as $|A_j|^2$, together with quadratic conserved quantities of Manley--Rowe type, and explicit formulas are obtained for these amplitudes
in terms of elliptic functions.

By contrast, the full complex-valued structure of the system,
in particular the explicit representation of the phase,
has not been pursued in detail.
Accordingly, the emphasis was naturally placed on physically observable quantities.
From the viewpoint of dispersive PDEs, however,
the phase plays a crucial role in determining the asymptotic behavior,
and a complete description of the solution requires
an explicit representation of both amplitude and phase.

\subsection{Integrable structure and its limitations}

The system \eqref{E:ODE} belongs to a class of integrable dynamical systems closely related to models studied in the physics literature, in particular in the context of resonant wave interactions \cite{ABDP}. 
These systems possess conserved quantities and admit reductions that reveal an underlying integrable structure.

A notable example is the three-wave interaction model
\begin{equation}\label{E:3ODE}
	\left\{
	\begin{aligned}
	&i \partial_\tau A_1 = A_2 A_3, \\
	&i \partial_\tau A_2 = \overline{A_3} A_1,\\
	&i \partial_\tau A_3 = \overline{A_2} A_1,
	\end{aligned}
	\right.
\end{equation}
which arises from the resonant system
\begin{equation}\label{E:3NLS}
	\left\{
	\begin{aligned}
	&i \partial_t u_1 + \frac1{2m_1}\partial_x^2 u_1 = u_2 u_3, \\
	&i \partial_t u_2 + \frac1{2m_2}\partial_x^2 u_2 = \overline{u_3} u_1,\\
	&i \partial_t u_3 + \frac1{2m_3}\partial_x^2 u_3 = \overline{u_2} u_1,
	\end{aligned}
	\right.
\end{equation}
where $m_j \neq 0$ ($j=1,2,3$) satisfy the resonance condition  $m_1=m_2+ m_3$.

The system \eqref{E:3ODE} admits multiple conserved quantities, such as
\[
	|A_1|^2 + |A_2|^2, \qquad |A_2|^2 - |A_3|^2,
\]
and can be regarded as a natural extension of \eqref{E:ODE}. 
Indeed, the latter is the special case $|A_2|^2-|A_3|^2=0$ of \eqref{E:3ODE}.

The integrability of systems such as \eqref{E:ODE} and \eqref{E:3ODE} has been essentially known in the physics literature (\cite{ABDP}). 
In particular, their amplitude dynamics can be described in terms of elliptic functions, and various special solutions have been constructed.
However, from the viewpoint of the Cauchy problem for the original PDEs, this information is not sufficient. 
Existing results typically do not provide a closed-form representation of general complex-valued solutions. 
In particular, the phase dynamics is not explicitly described, and the solutions are not given in a form that can be directly used to characterize asymptotics for arbitrary initial data.

The purpose of this paper is to provide a complete description of solutions to the reduced systems \eqref{E:ODE} and \eqref{E:3ODE} for arbitrary initial data, in a form suitable for applications to the corresponding PDEs.
This result bridges the gap between the integrable structure known in the physics literature and the requirements of the PDE analysis. 
In particular, it provides the missing component in the Katayama--Sakoda framework, allowing one to characterize the asymptotic behavior of solutions to \eqref{E:NLS} and \eqref{E:3NLS} for arbitrary small initial data.

A notable feature of the resulting asymptotics is that the nonlinear interaction affects not only the phase but also the amplitude, leading to a stronger long-range effect than the modified scattering observed in single equations.
In particular, the asymptotic profiles are described
through elliptic functions.
Such phenomena have previously been observed
for certain one-dimensional cubic nonlinear Schr\"odinger systems
\cite{Ma2,Ma3,Ur},
as well as in the final state problem
for two-dimensional quadratic nonlinear Schr\"odinger systems
\cite{HLN2011,HLN2016,OU2013,OU2015,Ur16}.
To the best of our knowledge, however,
the present work provides 
the first example of elliptic-function asymptotics
in the initial value problem
for a two-dimensional quadratic nonlinear Schr\"odinger system.

\subsection{Main results}

In this paper, we provide such a complete description.
We derive explicit formulas for both the amplitude and the phase
of solutions to the reduced ODE system,
where the amplitude is expressed in terms of Jacobi elliptic functions
and the phase in terms of elliptic integrals of the third kind.
The same structural feature appears in the three-wave interaction model,
where the integrability and amplitude dynamics have long been understood,
but a complete complex-valued description remains absent.

Our main result is an explicit phase-amplitude representation of solutions. 
More precisely, we show that the amplitude can be described in terms of Jacobi elliptic functions, while the phase is given explicitly by elliptic integrals of the third kind. 
This yields an explicit representation of the full complex-valued solution.

%
%
%

\subsubsection{Elliptic function and elliptic integral}

To describe the solution explicitly, we introduce standard elliptic functions.
For $k\in [0,1]$ and $t\in \R$, we define the amplitude function $\am (t;k)$ with parameter $k$ by the relation
\[
	t=\int_0^{\am(t;k)} \frac{d\theta}{\sqrt{1-k^2 \sin^2 \theta}}.
\]
Then, we define Jacobi elliptic function $\sn(t;k)$ with parameter $k$ by
\[
	\sn(t;k) = \sin \am (t;k).
\]
For $n\le 1$, $\phi\in (-\frac\pi2,\frac\pi2)$, and $k\in[0,1]$, we define the elliptic integral of the third kind $\Pi(n; \phi ;k)$ by
\[
	\Pi(n;\phi ;k) = \int_0^\phi \frac{d\theta}{(1-n\sin^2 \theta)\sqrt{1-k^2\sin^2\theta}}.
\]
This restriction ensures that the integrand remains non-singular.
In the expressions below, the argument $\phi$ will be given by $\am(t;k)$, 
for which the integral is well-defined for the parameters appearing in our problem.
Moreover, by the change of variables $\theta=\am(s;k)$, we obtain
\begin{equation}\label{E:Pirelation}
	\Pi(n;\am(t;k) ; k) = \int_0^t \frac{ds}{1-n \sn^2 (s;k)}.
\end{equation}
This expression is well-defined on $\R$ if $n<1$; it also remains well-defined in the case $k=1$ due to the restricted range of $\am(t;1)$.

\subsubsection{Expression of the solution to \eqref{E:3ODE}}
Let us turn to the description of a non-trivial solution of \eqref{E:3ODE}.
We denote $\rho_j = |A_j|^2$. First observe that 
\[
	 \rho  = \rho_1(0) + \tfrac12\rho_2(0)+ \tfrac12\rho_3(0)>0
\]
is conserved. 
Using symmetries of the system, we may reduce the problem to a normalized setting.
Note that \eqref{E:3ODE} has the following three symmetries:
If $(A_1(\tau),A_2(\tau),A_3(\tau))$ is a solution to \eqref{E:3ODE} then the following three triplets are also solutions:
\begin{itemize}
\item 
The scaled triplet $(\lambda A_1(\lambda \tau),\lambda A_2(\lambda \tau),\lambda A_3(\lambda \tau))$, where $\lambda>0$  (scale invariance).
\item The triplet
$(A_1(\tau),A_3(\tau),A_2(\tau))$ 
\item  The triplet $(e^{i\theta_2+i\theta_3}A_1(\tau),e^{i\theta_2}A_2(\tau),e^{i\theta_3}A_3(\tau))$, where $\theta_2,\theta_3 \in \R$ (gauge invariance).
\end{itemize}
By utilizing these invariance, we may suppose that 
\begin{equation}\label{E:inv}
	\rho=1, \quad \rho_2 \ge \rho_3, \quad A_2(0)\in \R_{\ge0}, \quad A_3(0) \in \R_{\ge0}
\end{equation}
without loss of generality.
In what follows, we suppose this assumption.

To parametrize the solution, we introduce two conserved quantities:
\[
    \gamma := \Re (\overline{A_1} A_2 A_3), \quad c =  \rho_1 + \rho_3.
\]
By \eqref{E:inv}, one has $0\le c \le 1$.
Define
\[
   m= m(c) := 
    \tfrac{2}{27} (1+ \sqrt{3(1-c)^2+1}) (2-\sqrt{3(1-c)^2+1})^2 ,
\]
which is the maximum of the function $  [0,c ]\ni e \mapsto  e(2-c- e )(c-e) .$
Note that $[0,1] \ni c \mapsto m(c) \in [0,\frac4{27}]$ is strictly monotone increasing
and hence a bijection.
The following relation is valid:
\begin{equation}\label{E:gm}
    0\le \gamma^2 \le m \le \tfrac4{27}.
\end{equation}
Note that $m=\frac4{27}$ ($\Leftrightarrow c=1$) if and only if $\rho_2=\rho_3$.
Define a function
\[
    f(e) := e(2-c - e)(c - e) - \gamma^2.
\]
Let $e_1 \le e_2 \le e_3$ be three real roots of the equation $f(e)  = 0$.
One sees that
\[
   0\le  e_1 \le \tfrac{2-\sqrt{3(1-c)^2+1}}{3} \le e_2 \le c \le 1\le  2-c \le e_3.
\]
We remark that 
$\gamma^2 = m$ corresponds the case $e_1=e_2$.

\begin{theorem}\label{T:sol_formula}
	Under the assumption \eqref{E:inv}, the solution to \eqref{E:3ODE} is written as follows:
	\begin{itemize}
	\item If $\gamma^2=m$, i.e., if $\gamma= \sigma \sqrt{m}$, $\sigma \in \{\pm1\}$, then
	\begin{align*}
	A_1(\tau) &= \sigma \sqrt{e_1 }  e^{-i\frac{\gamma}{e_1}  \tau}, \\
	A_2(\tau) &= \sqrt{2-c-e_1} e^{-i\frac{\gamma}{2-c-e_1}  \tau}, \\
	A_3(\tau) &= \sqrt{c-e_1} e^{-i\frac{\gamma}{c-e_1} \tau}
	\end{align*}
    for $\tau\in \R$, where $e_1=\tfrac{2- \sqrt{3(1-c)^2+1}}{3}$ in this case.    
	\item  If $\gamma=0$ then the solution is either the equilibrium $(A_1(0),0,0)$ or
    there exists $\tau_0$ such that
    	\begin{align*}
	A_1(\tau) &= -i\sqrt{c} \sn (\sqrt{2-c}(\tau-\tau_0);\sqrt{\tfrac{c}{2-c}}) , \\
	A_2(\tau) &= \sqrt{2-c}\dn (\sqrt{2-c}(\tau-\tau_0);\sqrt{\tfrac{c}{2-c}}), \\
	A_3(\tau) &= \sqrt{c} \cn (\sqrt{2-c}(\tau-\tau_0);\sqrt{\tfrac{c}{2-c}})
	\end{align*}
	for $\tau\in \R$
    \item If $0<\gamma^2 <m$ then we have 
    \[
    	0< e_1 < e_2 <  c \le 2-c <e_3
    \]
     and the representation
    \[
    A_j(\tau) = \sqrt{\rho_j(\tau)} e^{ i\Phi_j(\tau)}
    \]
    for $\tau\in \R$,
    where 
    \begin{align*}
    \rho_1(\tau) &=  e_1(1   
    - n_1 \sn^2( \omega
    (\tau-\tau_0); k )),\\
    \Phi_1(\tau) &=  - \tfrac{\gamma}{e_1\omega} \Pi(n_1; \am \left(\omega(\tau-\tau_0); k\right); k) + \arccos \tfrac{\gamma}{\sqrt{f(\rho_1(0))+\gamma^2}} ,\\
    n_1 &= -\tfrac{e_2-e_1}{e_1}<0,
    \end{align*}
    \begin{align*}
    \rho_2(\tau) &= (2-c-e_1)(1-n_2 \sn^2( \omega
    (\tau-\tau_0); k )),\\
    \Phi_2(\tau) &= - \tfrac{\gamma}{(2-c-e_1)\omega} \Pi(n_2; \am (\omega(\tau-\tau_0);k); k),\\
    n_2&= \tfrac{e_2-e_1}{2-c-e_1}<1,
    \end{align*}
    and
    \begin{align*}
    \rho_3(\tau) &=  (c-e_1)(1   
    - n_3 \sn^2( \omega
    (\tau-\tau_0); k )),\\
    \Phi_3(\tau) &= - \tfrac{\gamma}{(c-e_1)\omega} \Pi(n_3; \am \left(\omega(\tau-\tau_0); k\right); k) ,\\
    n_3&= \tfrac{e_2-e_1}{c-e_1}<1,
    \end{align*}
with the parameters
\[
    \omega = \sqrt{e_3-e_1}>0, \quad k = \sqrt{\frac{e_2-e_1}{e_3-e_1}}\in (0,1)
\]
and suitable constants $\tau_0\in\R$  and $\theta_1$ given by the initial data.
\end{itemize}
\end{theorem}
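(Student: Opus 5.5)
The plan is the standard reduction to a scalar Weierstrass-type equation, followed by integration of the phases from the equations for the arguments. I would write $A_j=\sqrt{\rho_j}e^{i\Phi_j}$ and $Z=\overline{A_1}A_2A_3$, so that $\gamma=\Re Z$. Direct computation from \eqref{E:3ODE} gives
\[
\rho_1'=2\Im(\overline{A_1}A_2A_3)=2\Im Z,\qquad \rho_2'=\rho_3'=-2\Im Z .
\]
This yields the conservation of $\rho=\rho_1+\tfrac12\rho_2+\tfrac12\rho_3$, of $c=\rho_1+\rho_3$, and of $\rho_2-\rho_3$. Differentiating $Z$ shows $(\Re Z)'=0$, so $\gamma$ is conserved, and $|Z|^2=\rho_1\rho_2\rho_3$.

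Under \eqref{E:inv} I would write $\rho_2=2-c-\rho_1$ and $\rho_3=c-\rho_1$, with $\rho_2-\rho_3=2-2c\ge0$. Then
\[
(\rho_1')^2=4(\Im Z)^2=4\big(\rho_1(2-c-\rho_1)(c-\rho_1)-\gamma^2\big)=4f(\rho_1).
\]
Since $f\ge0$ along the orbit, $\rho_1$ oscillates in $[e_1,e_2]$. This already gives \eqref{E:gm}, with $\gamma^2\le\max_{[0,c]}e(2-c-e)(c-e)=m$.

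For the amplitude, I would substitute $\rho_1=e_1+(e_2-e_1)\,s^2$ into $(\rho_1')^2=4(\rho_1-e_1)(e_2-\rho_1)(e_3-\rho_1)$. This gives $(s')^2=(e_3-e_1)(1-s^2)(1-k^2s^2)$, and hence $s=\sn(\omega(\tau-\tau_0);k)$ for a suitable $\tau_0$. Justifying this needs a short ODE argument: use $\rho_1''=2f'(\rho_1)$ to get uniqueness through the turning points, so that the solution is not stuck at them.

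Then $\rho_1=e_1(1-n_1\sn^2)$, and $\rho_2=(2-c-e_1)-(e_2-e_1)\sn^2$, $\rho_3=(c-e_1)-(e_2-e_1)\sn^2$, which are exactly the stated forms. The inequalities $n_2,n_3<1$ come from $e_2<c\le 2-c$.

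For the phases, the equations give
\[
\rho_j\Phi_j'=-\Re Z=-\gamma \quad\text{for } j=1,2,3.
\]
For instance, $\Re(\overline{A_2}\,i\partial_\tau A_2)=\Re(\overline{A_2}\,\overline{A_3}A_1)$ yields $-\rho_2\Phi_2'=\gamma$. Hence $\Phi_j'=-\gamma/\rho_j$. Integrating with \eqref{E:Pirelation} and changing variables $s\mapsto\omega s$ gives
\[
\Phi_j(\tau)=\Phi_j(\tau_0)-\tfrac{\gamma}{a_j\omega}\Pi\big(n_j;\am(\omega(\tau-\tau_0);k);k\big),\qquad a_1=e_1,\ a_2=2-c-e_1,\ a_3=c-e_1 .
\]
I expect the main obstacle to be fixing the integration constants consistently with the normalization \eqref{E:inv}. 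The gauge freedom lets us set $\Phi_2$ and $\Phi_3$ at the reference time. One then has to check that $\Phi_1=\Phi_2+\Phi_3-\arg Z$, that $\cos\arg Z=\gamma/|Z|=\gamma/\sqrt{f(\rho_1)+\gamma^2}$, and that the sign of $\Im Z$ is tied to the sign of $\rho_1'$. Choosing the branch of $\arccos$ correctly, and reconciling it with the shift $\tau_0$, requires care.

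The degenerate cases I would handle separately. If $\gamma^2=m$, then $e_1=e_2$ is a double root, so $\rho_1\equiv e_1$ is constant. The phases are then linear, $\Phi_j=-\gamma\tau/\rho_j$, and the sign $\sigma$ comes from $\arg Z\in\{0,\pi\}$ with $A_2(0),A_3(0)\ge0$.

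If $\gamma=0$, then $e_1=0$, $e_2=c$ and $e_3=2-c$. Either $Z\equiv0$, which gives the equilibrium, or $\Re Z=0$ forces constant phases with $\arg Z=\pm\pi/2$. Here the real reduction $A_1=-i\sqrt{c}\,\sn$, $A_2=\sqrt{2-c}\,\dn$, $A_3=\sqrt c\,\cn$ is verified directly from $\sn'=\cn\dn$, $\cn'=-\sn\dn$ and $\dn'=-k^2\sn\cn$, with $k^2(2-c)=c$. The sign is absorbed into $\tau_0$.

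Finally, in the generic case $0<\gamma^2<m$, strictness of the roots and $e_1>0$ follow because $f(0)=-\gamma^2<0$ and $f$ attains its maximum $m-\gamma^2>0$ on $[0,c]$.
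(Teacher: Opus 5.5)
Your proposal is correct and follows the paper's proof essentially step for step. It reduces to $(\rho_1')^2=4f(\rho_1)$, uses $\rho_1''=2f'(\rho_1)$ to pass through the turning points, and obtains the $\sn^2$ formula via your substitution $\rho_1=e_1+(e_2-e_1)s^2$, which is Lemma~\ref{L:EllipticIntegral} in differential form; it then gets the phases from $\rho_j\Phi_j'=-\gamma$ and \eqref{E:Pirelation}, and handles $\gamma=0$ by direct verification of the $\sn,\cn,\dn$ triple. The integration-constant issue you flag is settled in the paper by translating time so that $\tau_0=0$, i.e.\ $\rho_1(0)=e_1$; then $\Im Z(0)=\rho_1'(0)/2=0$ and $f(\rho_1(0))=0$, so $\Phi_1(0)=\arccos(\gamma/|\gamma|)\in\{0,\pi\}$ with no branch ambiguity (for $\tau_0\neq0$ the gauge normalization must indeed be imposed at $\tau_0$ rather than at $0$ for the displayed $\Phi_2,\Phi_3$ to hold, as you anticipated).
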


\subsubsection{Expression of the solution to \eqref{E:ODE}}

Let us turn to the description of the solution to \eqref{E:ODE}.
Note that it enjoys the following symmetries:
If $(A_1(\tau),A_2(\tau))$ is a solution to \eqref{E:3ODE} then the following two pairs are also solutions:
\begin{itemize}
\item 
The scaled pair $(\lambda A_1(\lambda \tau),\lambda A_2(\lambda \tau))$, where $\lambda>0$  (scale invariance).
\item  The pair $(e^{2i\theta_2}A_1(\tau),e^{i\theta_2}A_2(\tau))$, where $\theta_2 \in \R$ (gauge invariance).
\end{itemize}
By utilizing these invariance, we may suppose without loss of generality that 
\begin{equation}\label{E:inv2}
	|A_1|^2+|A_2|^2=1,  \quad A_2(0)\in \R_{\ge0}.
\end{equation}
The description is an immediate consequence of Theorem \ref{T:sol_formula}.
Indeed, the following theorem shows that 
the system \eqref{E:ODE} is embedded into \eqref{E:3ODE}
through the reduction $A_2 = A_3$.
\begin{theorem}\label{T:connection}
The pair $(A_1,A_2)$ is a solution to \eqref{E:ODE} if and only if the triplet $(A_1,A_2,A_2)$ is a solution to \eqref{E:3ODE}.
Consequently, any nontrivial solution to \eqref{E:ODE} satisfying \eqref{E:inv2}
is given as in Theorem \ref{T:sol_formula} with $c=1$ or equivalently with $m=\frac{4}{27}$.
\end{theorem}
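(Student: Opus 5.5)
The plan is a direct verification followed by uniqueness of the Cauchy problem. For the \emph{if} direction, suppose $(A_1,A_2)$ solves \eqref{E:ODE}, and set $A_3:=A_2$. The first equation of \eqref{E:3ODE} reads $i\partial_\tau A_1 = A_2A_3 = A_2^2$. The second and third equations both read $i\partial_\tau A_2 = \overline{A_2}A_1$. All three are precisely the equations of \eqref{E:ODE}, so $(A_1,A_2,A_2)$ solves \eqref{E:3ODE}. The \emph{only if} direction is the same computation read backwards. Given a solution $(A_1,A_2,A_2)$ of \eqref{E:3ODE}, its first two equations are exactly \eqref{E:ODE}.

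For the consequence, let $(A_1,A_2)$ be a nontrivial solution of \eqref{E:ODE} satisfying \eqref{E:inv2}. Consider the triplet $(A_1,A_2,A_2)$ from the first part.
\begin{itemize}
\item The normalization gives $\rho=\rho_1+\tfrac12\rho_2+\tfrac12\rho_3=|A_1|^2+|A_2|^2=1$.
\item We have $\rho_2=\rho_3$, so in particular $\rho_2\ge\rho_3$.
\item The initial values satisfy $A_2(0)=A_3(0)\in\R_{\ge0}$.
\end{itemize}
Hence \eqref{E:inv} holds and Theorem \ref{T:sol_formula} applies. Moreover $c=\rho_1+\rho_3=|A_1|^2+|A_2|^2=1$. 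By the remark preceding Theorem \ref{T:sol_formula}, $c=1$ is equivalent to $m=\tfrac4{27}$, which is equivalent to $\rho_2=\rho_3$.

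One should still check that the formulas of Theorem \ref{T:sol_formula} with $c=1$ indeed give $A_2\equiv A_3$. This is needed so that the first two components form a solution of \eqref{E:ODE}, and it is automatic from the uniqueness of the Cauchy problem for \eqref{E:3ODE}. The data $(A_1(0),A_2(0),A_2(0))$ are symmetric, so the solution of \eqref{E:3ODE} with these data is invariant under the swap symmetry $(A_1,A_2,A_3)\mapsto(A_1,A_3,A_2)$. Concretely, in each case of Theorem \ref{T:sol_formula} the factors $2-c-e_1$ and $c-e_1$ coincide, and so do $n_2$ and $n_3$, when $c=1$.

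There is no real obstacle here. The one point requiring care is checking that the normalizations \eqref{E:inv2} and \eqref{E:inv} match, specifically that $\rho=1$ and $c=1$ both reduce to $|A_1|^2+|A_2|^2=1$.
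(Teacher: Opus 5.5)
Your proof is correct and follows the paper's argument: the equivalence is the same direct substitution $A_3=A_2$ in both directions, and your check that \eqref{E:inv2} gives \eqref{E:inv} with $\rho=c=1$ makes explicit the ``consequently'' part, which the paper states without further comment. One minor imprecision: in the $\gamma=0$ case, $A_2\equiv A_3$ at $c=1$ comes from $k=1$ and $\dn(\cdot;1)=\cn(\cdot;1)$, not from the factors $2-c-e_1$ and $c-e_1$ coinciding. Your uniqueness and swap-symmetry argument already covers that case.
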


\subsection{Outline of the method}

%

Let us briefly comment on the mechanism behind the phase representation in Theorem \ref{T:sol_formula}.
A key observation is that the evolution of the phase can be expressed
explicitly in terms of quadratic quantities of the solution,
which themselves admit a closed description in terms of elliptic functions,
together with the conserved quantities of the system.
As a consequence, the phase is obtained by a single integration step,
leading naturally to elliptic integrals.

The use of such quadratic quantities as an intermediate object
was introduced in the first author's previous work \cite{Ma2},
where the nonlinearity is cubic and the associated quadratic quantities
satisfy a closed system of equations in a direct manner.
In the present case, the nonlinearity is quadratic,
and the corresponding quantities do not form a closed system at first glance.
However, by combining them with the conserved quantities,
we show that the system can still be reduced to a closed form.

We note that, at the level of amplitude dynamics,
this strategy is consistent with classical treatments in the physics literature \cite{ABDP},
where similar reductions appear implicitly,
although the full complex-valued reconstruction is not carried out.

\subsection{Application to NLS systems}

We turn to the application of our analysis on \eqref{E:ODE} and \eqref{E:3ODE} to the corresponding NLS systems \eqref{E:NLS} and \eqref{E:3NLS}.
Since \eqref{E:NLS} is realized as the special case $m_2=m_3$ and $u_{2,0}=u_{3,0}$ of \eqref{E:3NLS}, we state the result for \eqref{E:3NLS}.

%
\begin{theorem}
\label{T:NLS}
Suppose $m_j \neq0 $ ($j=1,2,3$) satisfies $m_1=m_2+m_3$.
There exist  $\nu>0$ and $\varepsilon_0>0$ such that the following holds:
For any initial datum $(u_{1,0},u_{2,0},u_{3,0}) \in H^{0,2}(\R^2; \C^3)$
satisfying $\varepsilon := \sum_{j=1}^3 \| u_{j,0} \|_{H^{0,2}} \le \varepsilon_0$,
there exists a unique global solution  $u=(u_1,u_2,u_3) \in C \cap L^\infty (\R; L^2)$ to \eqref{E:3NLS} under $u_{j}(0) = u_{j,0}$ ($j=1,2,3$)
 such that
$$e^{-i\frac{t}{2m_j}\Delta}u_j(t) \in C \cap L^\infty(\R; H^{0,2}).$$
Moreover, there exists a function $\eta_+=(\eta_{+,1},\eta_{+,2},\eta_{+,3})\in (C\cap L^\infty \cap L^2) (\R^2; \C^3)$
such that for $1\le j\le 3$ and $2 \le p \le \infty$,
\begin{align}
\label{E:asymp}
 u_j(t,x)=(it)^{-1}e^{im_j\frac{|x|^2}{2t}} A_j\left(\log t; \frac{x}{t}\right)+O_{L^p}(\varepsilon t^{-\frac{3}{2}+\frac1{p}+\nu \eps^2})
\end{align}
as $t\to\infty$, where $A=(A_1,A_2,A_3)$ 
denotes the solution to \eqref{E:3ODE} with initial condition $A(0;\xi)=\eta_+(\xi)$.
There also exists $\eta_-\in (C\cap L^\infty \cap L^2) (\R^2; \C^3)$
such that the asymptotics \eqref{E:asymp} holds as $t\to-\infty$.

If $m_2=m_3$ and $u_{2,0}=u_{3,0}$ hold in addition, $u_2$ and $u_3$ coincide for all time and $(u_1,u_2)$ is a unique global solution to \eqref{E:NLS}. Further, $\eta_+$ satisfies
$\eta_{+,2}=\eta_{+,3}$. Further, $A_2$ and $A_3$ coincide and  $(A_1,A_2)$ is a solution to \eqref{E:ODE}.
The same is true for $\eta_{-}$.
\end{theorem}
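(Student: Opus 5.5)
Following the Katayama--Sakoda framework cited in the introduction, the plan is to treat the PDE part in a unified way and let the ODE enter only through the profile. Write $u_j(t)=\mathcal U_j(t)v_j(t)$ with $\mathcal U_j(t)=e^{i\frac{t}{2m_j}\Delta}$, and factor $\mathcal U_j(t)=M_jD\mathcal F M_j$. Here $M_j=e^{im_j|x|^2/(2t)}$, $D$ is the $L^2$-dilation $(Df)(x)=(it)^{-1}f(x/t)$ (after rescaling by $m_j$), and $\mathcal F$ is the Fourier transform.

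\textbf{Step 1: reduction of the equation for $\hat v_j$.} Inserting this factorization into \eqref{E:3NLS}, the resonance condition $m_1=m_2+m_3$ makes the quadratic phases cancel exactly: $M_2M_3=M_1$ and $\overline{M_3}M_1=M_2$. Hence, with $\alpha_j(t,\xi)=\widehat{v_j}(t,\xi)$, we get
\[
i\partial_t \alpha_1 = t^{-1}\alpha_2\alpha_3 + R_1,\qquad i\partial_t\alpha_2=t^{-1}\overline{\alpha_3}\alpha_1+R_2,\qquad i\partial_t\alpha_3=t^{-1}\overline{\alpha_2}\alpha_1+R_3 .
\]
The remainders $R_j$ come from commuting $M_j-1$ through $\mathcal F$. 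Using $|M_j-1|\lesssim |x|^2/t$ together with two weights $x$ (this is why we work in $H^{0,2}$), they are bounded by $t^{-1-\delta}\|v\|_{H^{0,2}}^2$ in $L^\infty_\xi$ for some $\delta>0$.

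\textbf{Step 2: global existence and bounds.} Local well-posedness in the class $e^{-i\frac{t}{2m_j}\Delta}u_j\in C(H^{0,2})$ is standard; the key tool is the vector field $J_j=x+\frac{it}{m_j}\nabla$, which under the mass resonance commutes correctly with the nonlinearity (Galilean structure). A bootstrap then gives
\[
\|\alpha(t)\|_{L^\infty}\lesssim\eps,\qquad \|v(t)\|_{H^{0,2}}\lesssim \eps\langle t\rangle^{C\eps^2}.
\]
The $L^\infty$ bound is the nontrivial one. It follows from the conservation law $\rho=|\alpha_1|^2+\frac12|\alpha_2|^2+\frac12|\alpha_3|^2$ of the ODE, which remains conserved up to integrable errors pointwise in $\xi$. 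The weighted energy estimate then closes because the growth is only $t^{C\|\alpha\|_\infty^2}$. Uniqueness follows from the same estimates.

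\textbf{Step 3: matching with the ODE.} Set $\tau=\log t$. Then $\alpha(e^\tau,\xi)$ solves \eqref{E:3ODE} up to an error integrable in $\tau$, of size $e^{-\delta'\tau}$. The crucial input is a stability property of \eqref{E:3ODE}: for bounded data, the linearized flow grows at most polynomially in $\tau$. I would derive this from the explicit formulas of Theorem \ref{T:sol_formula}, since the solutions are bounded and quasi-periodic. Using the ODE flow map $\Psi_\tau$, define $\eta(t,\xi)=\Psi_{-\log t}(\alpha(t,\xi))$. Then $\partial_t\eta=O(t^{-1-\delta'+C\eps^2})$, so $\eta(t)\to\eta_+$ in $L^\infty\cap L^2$; continuity in $\xi$ follows from the weighted bound. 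Together with the decay of $D\mathcal F(M_j-1)v_j$ in $L^p$, this yields \eqref{E:asymp}, where the loss $\nu\eps^2$ comes from the growth of $\|v\|_{H^{0,2}}$. The case $t\to-\infty$ follows by time reversal and conjugation.

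For the final assertion, uniqueness shows that $(u_1,u_3,u_2)$ is also a solution with the same data, so $u_2=u_3$. The construction then gives $\eta_{+,2}=\eta_{+,3}$, and Theorem \ref{T:connection} completes the argument. The main obstacle I expect is Step 3, specifically a Lipschitz/growth estimate for $\Psi_\tau$ that is uniform in $\xi$, including near degenerate parameters such as $\gamma^2\to m$ or $\gamma\to0$. If Theorem \ref{T:sol_formula} is awkward there, I would instead use a Gronwall estimate on the linearization with coefficients bounded by $\|\alpha\|_\infty^2\lesssim\eps^2$. This gives growth $e^{C\eps^2\tau}=t^{C\eps^2}$, which is already absorbed into the $\nu\eps^2$ loss.
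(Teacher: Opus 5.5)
Your Steps 1--2 match the paper. Your $\widehat{v_j}$ is the paper's $w_j=\F_{m_j}U_{m_j}(-t)u_j$, the phases cancel because $m_1=m_2+m_3$, and the bootstrap on the weighted norm and on $t\|u\|_{L^\infty}$ is closed by the almost-conservation of $|w_1|^2+\frac12|w_2|^2+\frac12|w_3|^2$.

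The gap is the stability input in Step 3. Your claim that the linearized flow of \eqref{E:3ODE} grows at most polynomially in $\tau$ is false. The point $(a,0,0)$ is an equilibrium, and the linearization there reads $\delta A_2'=-ia\overline{\delta A_3}$, $\overline{\delta A_3}'=i\bar a\,\delta A_2$. Hence $\delta A_2''=|a|^2\delta A_2$, and the linearized flow grows like $e^{|a|\tau}$. After the normalization \eqref{E:inv}, these equilibria are the limits as $\tau\to\pm\infty$ of the $\gamma=0$, $c=1$ orbit $A_1=-i\tanh(\tau-\tau_0)$, $A_2=A_3=\operatorname{sech}(\tau-\tau_0)$. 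So no bound uniform in $\xi$ can be extracted from Theorem~\ref{T:sol_formula}.

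Your fallback miscounts the exponent. The Jacobian of a \emph{quadratic} $F$ is linear in $A$, so its coefficients are $O(\eps)$, not $O(\eps^2)$. The best uniform bound is therefore $|D\Psi_\tau|\lesssim e^{C\eps\tau}=t^{C\eps}$. The equilibrium computation shows this is sharp, and it cannot be absorbed into $t^{\nu\eps^2}$.

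This does not prevent $\eta_+$ from existing. Write $\alpha(\tau)=\widehat{v}(e^\tau)$ and let $\tilde r$ be the remainder in the $\tau$-equation. Since $|\tilde r|\lesssim\eps^2e^{-(\frac12-\nu\eps^2)\tau}$, $\partial_\tau\eta$ stays integrable. It does break the rate in \eqref{E:asymp} as you set it up. Bounding $\Psi_{\log t}(\eta(t))-\Psi_{\log t}(\eta_+)$ by the Lipschitz constant of $\Psi_{\log t}$ times $|\eta(t)-\eta_+|$ loses $t^{2C\eps}$.

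The fix is to transport each error only from $\sigma$ back to $\tau$:
\[
\Psi_\tau(\eta_+)-\alpha(\tau)=\int_\tau^\infty D\Psi_{\tau-\sigma}(\alpha(\sigma))\bigl[-i\tilde r(\sigma)\bigr]\,d\sigma .
\]
This is bounded by $\int_\tau^\infty e^{C\eps(\sigma-\tau)}\eps^2e^{-(\frac12-\nu\eps^2)\sigma}\,d\sigma\lesssim\eps^2e^{-(\frac12-\nu\eps^2)\tau}$, because backward trajectories stay $O(\eps)$ by conservation of $\rho$.

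The paper avoids the flow map altogether. It solves
\[
A=\alpha-i\int_\tau^\infty\tilde r\,d\sigma+i\int_\tau^\infty(F(A)-F(\alpha))\,d\sigma
\]
by contraction in a space weighted by $e^{(\frac12-\nu\eps^2)\tau}$ in $L^\infty$ and by $e^{(1-\nu\eps^2)\tau}$ in $L^2$. The contraction constant is $O(\eps)$, because the exponential weight makes $\int_\tau^\infty$ cost only $O(1)$. It then sets $\eta_+:=A(0)$. This uses nothing about \eqref{E:3ODE} beyond the boundedness of $\alpha$. Theorem~\ref{T:sol_formula} enters only afterwards, to describe $A$.
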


%
%

%

It follows by combining the above ODE analysis with the result of Katayama and the first author \cite{KaMa}, which generalizes the framework of Katayama--Sakoda \cite{KaSa}.

The assumptions required in these results concern the structure of the nonlinearity.
More precisely, one assumes gauge invariance, which in the present setting is ensured by the resonance condition $m_1 = m_2 + m_3$, together with the so-called bounded weak null condition.
The latter requires that there exists a constant $B \ge 1$ such that
\[
	\sup_{\tau \in \R} |A(\tau)| \le B |A(0)|
\]
for all solutions to the associated reduced ODE system.
For \eqref{E:3ODE}, this condition is satisfied as a consequence of the conservation of $\rho$.

The general theory in \cite{KaMa} is designed to treat a broad class of systems, including those with derivative nonlinearities.
In particular, the bounded weak null condition is strictly weaker than the existence of conserved quantities (for the corresponding ODE system).
In the present setting, however, \eqref{E:3NLS} admits classical quadratic conservation laws,
and therefore the assumption on the initial data can be reduced.
For the sake of completeness, we briefly outline the proof in Section \ref{S:NLS}.

\subsection{Finite time blowup for an ODE related to \eqref{E:ODE} and \eqref{E:3ODE}}\label{SS:blowup}

The following generalized model is also considered in the literature (see e.g. \cite{HLN2012,HLN2016}):
\begin{equation}\label{E:gNLS}
	\left\{
	\begin{aligned}
	&i \partial_t u_1 + \frac1{2m_1}\partial_x^2 u_1 = u_2^2, \quad (t,x) \in \R \times \R^2, \\
	&i \partial_t u_2 + \frac1{2m_2} \partial_x^2 u_2 = \lambda u_1\overline{u_2} 
	\end{aligned}
	\right.
\end{equation}
where $m_1,m_2>0$ and $\lambda \in \C$ with $m_1=2m_2$ and $|\lambda|=1$. 
Note that even if we consider a further generalized model by putting general nonzero complex constants in the front of both nonlinearities, it is reduced to the above model by change of variable (see \cite{HLN2011}).
One sees that the case $\lambda=1$ is the only case in which all solutions to the corresponding ODE remain bounded, and hence the Katayama–Sakoda framework applies.
Indeed, it will turn out that the corresponding ODE system
\begin{equation}\label{E:gODE}
	\left\{
	\begin{aligned}
	&i \partial_\tau A_1  = A_2^2,  \\
	&i \partial_\tau A_2  = \lambda A_1 \overline{A_2}
	\end{aligned}
	\right.
\end{equation}
admits a blowup solution when $\lambda\neq1$.
%

We state this result in more general form by 
introducing a generalized version. To make the symmetry of the system visible, we introduce it in the following form:
\begin{equation}\label{E:g3ODE}
	\left\{
	\begin{aligned}
	&i \partial_\tau A_1  = \lambda_1 \overline{A_2}\overline{A_3},  \\
	&i \partial_\tau A_2  = \lambda_2 \overline{A_3}\overline{A_1},\\
	&i \partial_\tau A_3  = \lambda_3 \overline{A_1} \overline{A_2},
	\end{aligned}
	\right.
\end{equation}
where $\lambda_j \in \C$ satisfies $|\lambda_j|=1$.
The system \eqref{E:gODE} above is the special case $\lambda_1=-1$ and $\lambda_2=\lambda_3=\lambda$ of \eqref{E:g3ODE}. Indeed, if $(A_1,A_2,A_3)$ is a solution to \eqref{E:g3ODE} under the choice of parameters then the pair $(\overline{A_1},A_2)$ is a solution to \eqref{E:gODE}.

We remark that only the relative arguments between three constants matter in \eqref{E:g3ODE}
since the change of variable $A_j \mapsto e^{i\theta_j} A_j$ ($j=1,2,3$) changes the constant $\lambda_j$ into $\lambda_j e^{i(\theta_1+\theta_2+\theta_3)}$ ($j=1,2,3$), namely, the three constants rotate in the same way. This also shows that the equation has the following gauge invariance property: If $(A_1,A_2,A_3)$ is a solution then, for any choice of $\theta_2,\theta_3 \in \R$,
$(e^{-i(\theta_2+\theta_3)}A_1, e^{i\theta_2}A_2, e^{i\theta_3}A_3)$
is also a solution (with the same coefficients).

\begin{theorem}\label{T:blowup}
If $|\lambda_1+ \lambda_2 + \lambda_3|>1$ then \eqref{E:g3ODE} admits a finite time blowup solution of the form
\[
	A_j (\tau)= z_j (1-\tau)^{-1+ i\psi_j} 
\]
with $\psi_1+\psi_2 + \psi_3=0$.
In particular, if two of $\lambda_j$ ($j=1,2,3$) are the same in addition, say if
$\lambda_1 = -1$ and $\lambda_2=\lambda_3 =e^{i\zeta}\neq 1$, then
the parameters can be given explicitly and the solution is given as follows:
\[
	A_1(\tau) = e^{i\zeta} ( \mu \sin \zeta-i)
	(1-\tau)^{-1- 2i  \mu \sin \zeta}, \quad
	A_2(\tau) = \sqrt{{3}{\mu} } (1-\tau)^{-1+ i \mu \sin \zeta},
\]
where
\[
	\mu = \tfrac2{ \sqrt{\cos^2 \zeta + 8} - 3\cos \zeta}>0.
\]
Under this specific choice of $\lambda_j$, the pair $(\overline{A_1}, A_2)$ is an exact blowing-up solution to \eqref{E:gODE} with $\lambda=\lambda_2$.
\end{theorem}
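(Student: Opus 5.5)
We look for self-similar solutions by substituting the ansatz directly and reducing \eqref{E:g3ODE} to an algebraic system. For $A_j=z_j(1-\tau)^{-1+i\psi_j}$ we have $\partial_\tau A_j=z_j(1-i\psi_j)(1-\tau)^{-2+i\psi_j}$. The right-hand side for $j=1$ is $\lambda_1\overline{z_2}\,\overline{z_3}(1-\tau)^{-2-i(\psi_2+\psi_3)}$, and similarly for $j=2,3$. Matching exponents forces $\psi_1+\psi_2+\psi_3=0$. The equations then reduce to
\[
(\psi_j+i)z_j=\lambda_j\,\overline{z_k}\,\overline{z_l},\qquad \{j,k,l\}=\{1,2,3\}.
\]
Set $P=z_1z_2z_3$ and look for solutions with all $z_j\neq0$. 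Multiplying by $\overline{z_j}$ gives the equivalent system $(\psi_j+i)|z_j|^2=\lambda_j\overline{P}$.

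Write $\beta=-\arg P$ and $\lambda_j=e^{i\theta_j}$, so that $\lambda_j\overline P=|P|e^{i(\theta_j+\beta)}$. The system is solvable exactly when the following hold.
\begin{enumerate}
\item[(a)] $\alpha_j:=\theta_j+\beta\in(0,\pi)$ modulo $2\pi$ for each $j$, so that $\psi_j+i$ is a positive multiple of $e^{i\alpha_j}$.
\item[(b)] $\psi_j=\cot\alpha_j$ and $|z_j|^2=|P|\sin\alpha_j$.
\item[(c)] Consistency of the moduli: $|P|^2=\prod_j|z_j|^2=|P|^3\prod_j\sin\alpha_j$, that is, $|P|=\bigl(\prod_j\sin\alpha_j\bigr)^{-1}$.
\item[(d)] $\sum_j\cot\alpha_j=0$.
\end{enumerate}
Once $\beta$ is fixed, the phases of the $z_j$ are free apart from the single constraint $\arg(z_1z_2z_3)=-\beta$, so they can be chosen to match.

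Everything therefore reduces to finding $\beta$ satisfying (a) and (d). Condition (a) asks that all $e^{i\beta}\lambda_j$ lie in the open upper half-plane. On any maximal $\beta$-interval where this holds, $g(\beta)=\sum_j\cot(\theta_j+\beta)$ is continuous and strictly decreasing. At the left endpoint some $\theta_j+\beta\downarrow0$, so $g\to+\infty$; at the right endpoint some $\theta_j+\beta\uparrow\pi$, so $g\to-\infty$. The intermediate value theorem then gives a zero.

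The key step, and the only place the hypothesis is used, is to show that $|\lambda_1+\lambda_2+\lambda_3|>1$ forces the three unit vectors to lie in a common open half-plane. Argue by contradiction. If they do not, the cyclic angular gaps $a,b,c$ between consecutive $\lambda_j$ satisfy $a+b+c=2\pi$ and $a,b,c\le\pi$. Then
\[
|\lambda_1+\lambda_2+\lambda_3|^2=3+2(\cos a+\cos b+\cos c).
\]
For $a+b+c=2\pi$ the identity $\cos a+\cos b+\cos c=-1-4\cos\frac a2\cos\frac b2\cos\frac c2$ holds. Since each half-angle lies in $[0,\pi/2]$, the product of cosines is nonnegative, so the sum of cosines is at most $-1$ and $|\sum\lambda_j|\le1$, a contradiction. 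I expect this to be the main obstacle; the rest is bookkeeping.

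For the explicit case $\lambda_1=-1$, $\lambda_2=\lambda_3=e^{i\zeta}$, I would use the symmetry to impose $\psi_2=\psi_3=:\mu\sin\zeta$ and $\psi_1=-2\mu\sin\zeta$, and take $z_2=z_3=\sqrt{3\mu}$ real. Then $(\psi_2+i)z_2=e^{i\zeta}\overline{z_3}\,\overline{z_1}$ determines $z_1$, and the $j=1$ equation becomes a scalar equation for $\mu$. Clearing denominators should give the quadratic $2\mu^2+3\mu\cos\zeta-1=0$ (to be checked), whose positive root is $\mu=2/(\sqrt{\cos^2\zeta+8}-3\cos\zeta)$. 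Finally I would verify directly that $(\overline{A_1},A_2)$ solves \eqref{E:gODE}, as noted before the theorem.
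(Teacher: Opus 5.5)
Your existence argument is correct and takes a genuinely different route from the paper. There is one concrete algebra error, in the explicit symmetric case.

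\textbf{Comparison of routes.} The paper proceeds in three steps.
\begin{itemize}
\item It proves Lemma~\ref{L:blowup} by parametrizing the triple by its largest angular gap $\theta$ and treating $\theta<\pi$, $\theta=\pi$, $\theta>\pi$ separately.
\item It handles two coinciding $\lambda_j$ by explicit algebra.
\item For distinct $\lambda_j$, it normalizes $\lambda_1=1$, $z_2,z_3>0$, and eliminates the $z_j$. This gives the two real equations \eqref{E:gBUpf4}--\eqref{E:gBUpf5} for $(\psi_2,\psi_3)$, with sign constraints $\psi_2+2\psi_3<0$ and $2\psi_2+\psi_3<0$. A solution is then located as the intersection of two monotone curves.
\end{itemize}

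You instead use the gauge-invariant product $P=z_1z_2z_3$ to decouple the system into $(\psi_j+i)|z_j|^2=\lambda_j\overline P$. Everything is then fixed by the single angle $\beta$. The only remaining constraint, $\sum_j\cot(\theta_j+\beta)=0$, is a one-variable monotone intermediate-value problem.

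What your route buys:
\begin{itemize}
\item It treats coincident and distinct $\lambda_j$ uniformly.
\item It shows the self-similar profile is unique up to the gauge symmetry.
\item Your condition (a) is also necessary. Together with the reverse use of your identity $\cos a+\cos b+\cos c=-1-4\cos\frac a2\cos\frac b2\cos\frac c2$ (a gap larger than $\pi$ makes the product negative), this shows such a profile exists exactly when $|\lambda_1+\lambda_2+\lambda_3|>1$. That recovers the sharpness remark for this ansatz.
\item Your half-plane criterion is shorter than the paper's case analysis.
\end{itemize}
The paper's route, for its part, works directly with the ansatz coefficients and produces the symmetric-case formulas along the way.

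\textbf{The error in the symmetric case.} Your guessed quadratic is wrong, and its positive root is not the stated $\mu$. Take $z_2=z_3=\sqrt{3\mu}$, $\psi:=\psi_2=\psi_3=\mu\sin\zeta$, and $z_1=e^{i\zeta}(\psi-i)$. Then the $j=1$ equation reads $(1-2\psi^2)+3i\psi=-3\mu e^{-i\zeta}$.
\begin{itemize}
\item The imaginary part holds automatically.
\item The real part gives $2\mu^2\sin^2\zeta-3\mu\cos\zeta-1=0$. This is the paper's $2\psi^2-3\psi\cot\zeta-1=0$. In your own framework it is $\cot(\alpha-\zeta)=-2\cot\alpha$ with $\alpha=\zeta+\beta$.
\end{itemize}
For $\sin\zeta\neq0$ the two roots have opposite signs. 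The positive one is $\frac{3\cos\zeta+\sqrt{\cos^2\zeta+8}}{4\sin^2\zeta}=\frac{2}{\sqrt{\cos^2\zeta+8}-3\cos\zeta}$, as claimed. For $\sin\zeta=0$ (so $\cos\zeta=-1$) the equation reduces to $3\mu=1$, which agrees with the formula.

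By contrast, the positive root of your $2\mu^2+3\mu\cos\zeta-1=0$ is $2/(\sqrt{9\cos^2\zeta+8}+3\cos\zeta)$. This coincides with $\mu$ only when $\cos\zeta=0$. The method is fine; the computation needs to be redone as above.
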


\begin{remark}
The assumption of the theorem is sharp in the following sense.
The case $-\lambda_1 = \lambda_2= \lambda_3$ of \eqref{E:g3ODE}, in which case $|\lambda_1+\lambda_2+\lambda_3|=1$, corresponds to \eqref{E:3ODE}.
Indeed, if the constants satisfy the condition, we may suppose that $-\lambda_1 = \lambda_2= \lambda_3=1$ due to the above change of variable. Then, the triplet $(\overline{A_1},A_2,A_3)$ solves \eqref{E:3ODE}.
As a result, as seen in Theorem \ref{T:sol_formula}, all solution is bounded and given explicitly.
\end{remark}

\begin{remark}
The assumption of the theorem can be rephrased as the existence of a line through the origin in the complex plane such that all $\lambda_1$, $\lambda_2$, $\lambda_3$ are located in the same half plane given by the line.
\end{remark}

Consider a generalized three-wave interaction model 
\begin{equation}\label{E:g3NLS}
	\left\{
	\begin{aligned}
	&i \partial_t u_1 + \frac1{2m_1}\partial_x^2 u_1 = \lambda_1 \overline{u_2} \overline{u_3}, \\
	&i \partial_t u_2 + \frac1{2m_2}\partial_x^2 u_2 = \lambda_2 \overline{u_3} \overline{u_1},\\
	&i \partial_t u_3 + \frac1{2m_3}\partial_x^2 u_3 = \lambda_3 \overline{u_1} \overline{u_2}.
	\end{aligned}
	\right.
\end{equation}
with $m_j\neq0$ and
$\lambda_j \in \C$ satisfying $m_1+m_2+m_3=0$ and
 $|\lambda|=1$.
The system \eqref{E:g3ODE} appears as
 the reduced ODE system corresponding to this NLS system.
 One consequence of Theorem \ref{T:blowup} is that if $|\lambda_1+\lambda_2+\lambda_3|>1$ then
the assumptions of the Katayama--Sakoda framework fail.
Namely, the asymptotic result similar to Theorem \ref{T:NLS} is not obtained, at least without any additional assumption.
To the best of the authors' knowledge, the asymptotic behavior of small solutions are not known for this generalized model in the case $|\lambda_1+\lambda_2+\lambda_3|\le 1$, except for 
\eqref{E:3ODE} (and those reduced this model) studied above.

\medskip

The rest of the paper is organized as follows.
In Section \ref{S:formula}, we prove Theorems \ref{T:sol_formula} and \ref{T:connection}.
We then prove Theorem \ref{T:blowup} in Section \ref{S:blowup}.
Finally, we treat the application to NLS (Theorem \ref{T:NLS})
in Section \ref{S:NLS}

\section{Explicit formula of solutions to \eqref{E:ODE} and \eqref{E:3ODE}}\label{S:formula}

In this section, we prove Theorems \ref{T:sol_formula} and \ref{T:connection}.
We first quickly prove Theorem \ref{T:connection}.
\begin{proof}[Proof of Theorem \ref{T:connection}]
Let $(A_1,A_2)$ be a solution to \eqref{E:ODE}.
Then, one observes that $(A_1,A_2,A_2)$ is a solution to \eqref{E:3ODE}.
Conversely, suppose that $(A_1,A_2,A_3)$ is a solution to \eqref{E:3ODE} and $A_2=A_3$ for all time. Then, substituting the identity to \eqref{E:3ODE}, we see that $(A_1,A_2)$ solves \eqref{E:ODE}.
\end{proof}

In the rest of this section, we prove Theorem \ref{T:sol_formula}.

\subsection{Preliminaries}

We 
recall a well-known integral (cf. \cite[233.00]{BFBook}).
For self-containedness, we give a proof.
\begin{lemma}\label{L:EllipticIntegral}
    For $a<b<c$, one has
\begin{equation}\label{E:formula1}
    \int_a^x \frac{dy}{\sqrt{(y-a)(b-y)(c-y)}} = \frac{2}{\sqrt{c-a}} F\left( \arcsin\sqrt\frac{x-a}{b-a} ; \sqrt{\frac{b-a}{c-a}}\right)
\end{equation}
for $x\in [a,b]$,
where $F(\phi;k)=\int_0^\phi \frac{d\theta}{\sqrt{1-k^2 \sin^2 \theta}}$ is the incomplete elliptic integral of first kind.
Further, if we denote the right-hand side of \eqref{E:formula1} as $u$, we have
\begin{equation}\label{E:formula2}
    x=x(u)= a+ (b-a) \sn^2 ( \tfrac{\sqrt{c-a}}{2}u;\sqrt{\tfrac{b-a}{c-a}}).
\end{equation}
\end{lemma}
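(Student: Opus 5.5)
The plan is to use the substitution suggested by the right-hand side, namely $y = a + (b-a)\sin^2\theta$ with $\theta\in[0,\tfrac\pi2]$. This map is a monotone bijection from $[0,\tfrac\pi2]$ onto $[a,b]$. Under it,
\[
	y-a=(b-a)\sin^2\theta,\qquad b-y=(b-a)\cos^2\theta,\qquad dy = 2(b-a)\sin\theta\cos\theta\,d\theta,
\]
and
\[
	c-y=(c-a)-(b-a)\sin^2\theta=(c-a)\bigl(1-k^2\sin^2\theta\bigr),\qquad k^2=\tfrac{b-a}{c-a}\in(0,1).
\]
Substituting into the integrand, the factor $(b-a)\sin\theta\cos\theta$ cancels. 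This leaves
\[
	\frac{2\,d\theta}{\sqrt{c-a}\sqrt{1-k^2\sin^2\theta}}.
\]
The upper limit $y=x$ corresponds to $\theta=\arcsin\sqrt{(x-a)/(b-a)}$. This gives \eqref{E:formula1} directly, with $F$ as defined.

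Next I will check the endpoints and convergence. At $y=a$ and $y=b$ the original integrand has inverse-square-root singularities, which are integrable. After the substitution the integrand is smooth and bounded, since $k<1$ and $c>b$, so the change of variables is legitimate as an improper integral. Convergence at both endpoints follows, including the case $x=b$, where $\theta=\tfrac\pi2$.

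For \eqref{E:formula2}, set $\phi=\arcsin\sqrt{(x-a)/(b-a)}\in[0,\tfrac\pi2]$. Then $\tfrac{\sqrt{c-a}}{2}u = F(\phi;k)$. By the definition of $\am$ as the inverse of $F(\cdot;k)$, which is strictly increasing since its integrand is positive, we get $\phi=\am(\tfrac{\sqrt{c-a}}{2}u;k)$. Taking $\sin$ and then squaring gives $\sin^2\phi=\sn^2(\tfrac{\sqrt{c-a}}2u;k)=(x-a)/(b-a)$, which rearranges to \eqref{E:formula2}.

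I expect no real obstacle. The only care needed is the inversion step: on $[0,\tfrac\pi2]$ we have $\sin\phi\ge0$, so $\sqrt{(x-a)/(b-a)}=\sin\phi$ is recovered unambiguously, and the definition of $\am$ applies directly.
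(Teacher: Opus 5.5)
Your proposal is correct and matches the paper's proof: the substitution $y=a+(b-a)\sin^2\theta$ gives \eqref{E:formula1}, and inverting $F(\cdot;k)$ through the definition of $\am$, then taking $\sin$ and squaring, gives \eqref{E:formula2}. Your remarks on the integrable endpoint singularities and on recovering $\sqrt{(x-a)/(b-a)}=\sin\phi$ unambiguously for $\phi\in[0,\tfrac\pi2]$ make explicit details that the paper leaves implicit.
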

\begin{proof}
The identity \eqref{E:formula1} follows from by the change of variable $y = a + (b-a) \sin^2 \phi$.

If the right-hand side of \eqref{E:formula1} is denoted as $u\ge0$, \eqref{E:formula1} is written as
\[
    \sqrt{\tfrac{x-a}{b-a}} = \sn ( \tfrac{\sqrt{c-a}}{2}u;\sqrt{\tfrac{b-a}{c-a}})
\]
by definition of Jacobi elliptic function, from which we obtain \eqref{E:formula2}.
 \end{proof}

\subsection{Proof for the case $\gamma=0$}

Let us first consider the case $\gamma=0$. 
This case is handled in \cite{OU2015,Ur16}. 

It is easy to see that $(A_1,0,0)$ is an equilibrium point for any $A_1\in \C$.
Suppose that $(A_1,A_2,A_3)$ is not an equilibrium, i.e., suppose that $A_2(0)>0 $.

Let $(B_1,B_2,B_3)$ be a real-valued solution to
\begin{equation}\label{E:Beq}
	B_1' = B_2B_3, \quad B_2' = - B_1B_3, \quad B_3' = - B_1B_2.
\end{equation}
One sees that $(-iB_1,B_2,B_3)$ is a solution to \eqref{E:3ODE} with $\gamma=0$.
Thanks to the well-known formula
\begin{align*}
	\tfrac{d}{d\tau} \sn \tau &= \cn \tau \dn \tau, &
	\tfrac{d}{d\tau} \cn \tau &= -\sn \tau \dn \tau,&
	\tfrac{d}{d\tau} \dn \tau &= -k^2\sn \tau \cn \tau
\end{align*}
(e.g. \cite[731.01,731.02,731.03]{BFBook}),
we see that, for any $\lambda>0$, $k\in [0,1]$, and $\tau_0\in \R$,
the triplet
\[
	B_1(\tau) = \lambda k \sn (\lambda (\tau-\tau_0) ; k),\quad
	B_2(\tau) = \lambda \dn (\lambda (\tau-\tau_0) ; k),\quad
	B_3(\tau) = \lambda k \cn (\lambda (\tau-\tau_0) ; k)
\]
solves \eqref{E:Beq}.
Suppose that $(B_1,B_2,B_3)$ satisfies \eqref{E:inv}. Then,
\[
	1= |B_1(\tau_0)|^2 + \tfrac12 |B_2(\tau_0)|^2 + \tfrac12|B_3(\tau_0)|^2= \tfrac12 \lambda^2 (k^2 +1).
\]
Under this constraint, we have $c= \lambda^2 k^2$. Hence, we have
$k= \sqrt{\frac{c}{2-c}}$ and $\lambda = \sqrt{2-c}.$
One sees that we can choose $\tau_0$ so that $(-i B_1(0),B_2(0),B_3(0)) = (A_1(0),A_2(0),A_3(0))$.

%
\subsection{Proof for the case $\gamma\neq0$}

We proceed to the main case $\gamma\neq0$.
Although it is sufficient to check that the given formula actually is a solution, we here give a constructive proof based on the argument in \cite{Ma2}.

\begin{proof}[Proof of Theorem \ref{T:sol_formula} under $\gamma \neq0$]
Pick a solution $(A_1,A_2,A_3)$ to \eqref{E:3ODE} satisfying \eqref{E:inv} and $\gamma \neq0$.

\subsubsection*{Step 1}
We first specify the behavior of quadratic quantities.
One sees from \eqref{E:3ODE} that
\[
    \rho_1' = - 2 \Im (A_1\overline{A_2}\overline{A_3}).
\]
Hence,
\[
	(\rho_1' )^2 = 4 (\Im (A_1 \overline{A_2} \overline{A_3}))^2 = 4(\rho_1\rho_2\rho_3 -{\gamma}^2).
\]
Thus, 
using the fact that $\rho_1+\rho_3=c$ and $\rho_1+\rho_2=2-c$ are conserved,
we have the following relation
\begin{equation}\label{E:rho1ODE1}
   0\le (\rho_1')^2 
    =4 f(\rho_1).
\end{equation}

In the extremal case $\gamma^2=m>0$,
$f(\rho_1)\le0 $ holds for all $\rho_1 \in[0, c]$.
Hence, we see from \eqref{E:rho1ODE1} that
$f(\rho_1)=0$
for all time.
Hence, $\rho_1(\tau)=\rho_1(0)=e_1 $ for all time.

Let us consider the other case $0<\gamma^2<m$.
In this case, we have $$0<e_1<e_2<c\le 2-c <e_3<2.$$
Since the right hand side of \eqref{E:rho1ODE1} is nonnegative, we have
$\rho_1 \in [ e_1, e_2]$ for all time.
This implies that the orbit of $(\rho_1,\rho_1')$ is a subset of a closed curve 
\[
	\{ (x,y)\in [e_1,e_2]\times \R ; y^2 = 4f(x)\}.
\]
Further, since
\[
	\rho_1'' = (-2\Im (A_1 \overline{A_2} \overline{A_3}))' = 2 (3\rho_1^2 -4\rho_1+c(2-c))=2f'(\rho_1),
\]
we see that $\rho_1'(\tau)=0$ implies that $\rho_1(\tau) \in \{ e_1, e_2\}$, and hence that
$\rho_1''(\tau)\neq0$.
Hence the vector field does not vanish on the level curve.
Since the system is autonomous and the orbit is a compact level set without equilibrium points, the solution is periodic.

Let us find the explicit formula.
Since the solution is periodic,
it suffices to derive the formula on a single period.
Pick a time $\tau_0$ so that $\rho_1(\tau_0)=e_1$.
For simplicity, we may suppose that $\tau_0=0$.
Then, we have $\rho_1(0)= e_1$.
We have $\rho_1'(0)=0$ and $\rho_1''(0)=2f'(e_1)>0$.
Then, \eqref{E:rho1ODE1} is written as
\[
    \rho_1' =\begin{cases} 2 \sqrt{ (\rho_1-e_1)(e_2-\rho_1)(e_3-\rho_1)} & (\tau>0),\\
 -2 \sqrt{(\rho_1-e_1)(e_2-\rho_1)(e_3-\rho_1)} & (\tau<0)
 	\end{cases}
\]
for small $\tau$. This implies
\[
    \int_{e_1}^{\rho_1(\tau)} \frac{d x}{\sqrt{ (x-e_1)(e_2-x)(e_3-x)}}
    = 2 |\tau|
\]
for sufficiently small $|\tau|$.
Thus, by Lemma \ref{L:EllipticIntegral}, we see that
\begin{equation}\label{E:Formula pf1}
\begin{aligned}
    \rho_1(\tau) &= e_1 + (e_2-e_1) \sn^2
    \left( \sqrt{e_3-e_1}\tau,\sqrt{\tfrac{e_3-e_2}{e_3-e_1}}\right)\\
   & = e_1 (1-n_1 \sn^2 ( \omega \tau ; k) )
   \end{aligned}
\end{equation}
for small $\tau$.
Note that this expression is valid as long as $\rho_1$ is monotone.
Hence, it is valid on $[-\frac{1}{\omega}K(k),\frac{1}{\omega}K(k)]$, where $K(k)$ is the complete elliptic integral.
We have
\[
	\rho_1(-\tfrac{1}{\omega}K(k))
	= e_2 = \rho_1(\tfrac{1}{\omega}K(k))\quad \text{and} \quad
	\rho_1'(-\tfrac{1}{\omega}K(k))
	=0= \rho_1'(\tfrac{1}{\omega}K(k)).
\]
Hence, by the uniqueness of the solution to ODE,
one verifies that the period of $\rho_1$ is 
$\tfrac{2}{\omega}K(k)$.
Since $\sn^2(\omega \tau;k)$ has the same period 
$\tfrac{2}{\omega}K(k)$, and since the expression 
\eqref{E:Formula pf1} agrees with $\rho_1$ on 
$[-\tfrac{1}{\omega}K(k), \tfrac{1}{\omega}K(k)]$, 
we conclude that \eqref{E:Formula pf1} is valid on $\R$.
\subsubsection*{Step 2}

Once we have $\rho_1$, we can reconstruct $A_1$, $A_2$, and $A_3$.
We treat the whole $0<\gamma^2\le m$ case in a unified way.
Note that the expression \eqref{E:Formula pf1} is valid even when $\gamma^2= m$, in which case $e_1=e_2$ and hence $n_1=0$.

Let $\Phi_j (t)$ be the phase of $A_j(t)$. Then, considering the imaginary part of the identity
\[
    \overline{A_1} \cdot iA_1' = i(\sqrt{\rho_1})' \sqrt{\rho_1} - \Phi_1' \rho_1,
\]
one has
\[
    -\Phi_1' \rho_1 = \Re (\overline{A_1}A_2A_3) =\gamma,
\]
namely
\[
    \Phi_1(\tau) = \Phi_1(0) - \int_0^\tau \frac{\gamma}{\rho_1(s)}\,ds.
\]
Hence, we have the solution formula
\[
    A_1(\tau) = \sqrt{\rho_1(\tau)}\exp \left(i\Phi_1(0)  - i\int_0^\tau \frac{\gamma}{\rho_1(s)}\,ds\right).
\]
Recalling that $A_2(0),A_3(0) \in \R_{\ge0}$, we see that
\begin{align*}
	\gamma = \Re (\overline{A_1}(0)A_2(0)A_3(0))
	&= \cos \Phi_1(0) \sqrt{\rho_1(0)(2-c-\rho_1(0))(c-\rho_1(0))} \\
	&= \cos \Phi_1(0) \sqrt{f(\rho_1(0)) + \gamma^2}.
\end{align*}
Thus,
\[
	\Phi_1(0) = \arccos \tfrac{\gamma}{\sqrt{f(\rho_1(0))+\gamma^2}}.
\]
Then, it suffices to prove that $\Phi_1(\tau)$ is a primitive of $-\gamma/\rho_1$.
However, it is readily seen
by \eqref{E:Pirelation}.

Let us consider the representation of $A_2$. 
The formula of
$\rho_2$ is obtained by \eqref{E:Formula pf1} and $\rho_1+\rho_2=2-c$.
Let us consider the phase part. As above, we have
\[
    -\Phi_2' \rho_2 = \Re (A_1  \overline{A_2} \overline{A_3}) = \gamma.
\]
and so
\[
    \Phi_2(\tau) = - \int_0^\tau \frac{\gamma}{2-c-\rho_1(s)}\,ds.
\]
Hence, we obtain
\[
    A_2(t) = \sqrt{2-c-\rho_1(\tau)} \exp \left(  - i\int_0^\tau \frac{\gamma}{2-c-\rho_1(s)}\,ds\right).
\]
One deduces from \eqref{E:Pirelation} that $\Phi_2(\tau)$ is a primitive of $-\gamma/(2-c-\rho_1)$.
Similarly, we deduce from $\rho_1+\rho_3=c$ that
\[
    A_3(t) = \sqrt{c-\rho_1(\tau)} \exp \left(  - i\int_0^\tau \frac{\gamma}{c-\rho_1(s)}\,ds\right).
\]
It follows that $\Phi_3(\tau)$ is a primitive of $-\gamma/(c-\rho_1)$.
This completes the proof.
\end{proof}

\section{Blowup solutions to \eqref{E:gODE} and \eqref{E:g3ODE}}\label{S:blowup}

In this section, we prove the existence of a blowup solution to \eqref{E:gODE} and \eqref{E:g3ODE}.
\subsection{An equivalent characterization of the conditions}
Let us first introduce an alternative characterization of the assumption of the theorem.

\begin{lemma}\label{L:blowup}
Suppose that $|\lambda_j|=1$ ($j=1,2,3$).
The condition $|\lambda_1+\lambda_2+\lambda_3|>1$ holds if and only if
there exist a suitable relabeling of the indices and a rotation $\lambda_j\mapsto e^{i\zeta} \lambda_j $ ($j=1,2,3$) such that $\{\lambda_j\}$ is given as
$\lambda_j = e^{i\zeta_j}$ with $0= \zeta_1 \le \zeta_2 \le \zeta_3 <\pi$.
\end{lemma}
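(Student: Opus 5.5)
Both the hypothesis $|\lambda_1+\lambda_2+\lambda_3|>1$ and the target configuration are unchanged by a common rotation $\lambda_j\mapsto e^{i\zeta}\lambda_j$ and by relabeling. The plan is therefore to reduce to a normalized configuration and then compute the modulus directly. We rotate and relabel so that $\lambda_1=1$ and $\lambda_2=e^{i\alpha}$, $\lambda_3=e^{i\beta}$ with $\alpha,\beta\in[0,2\pi)$. We then look at the three arcs into which the points $\lambda_1,\lambda_2,\lambda_3$ cut the unit circle. Their angular lengths $\ell_1,\ell_2,\ell_3\ge 0$ satisfy $\ell_1+\ell_2+\ell_3=2\pi$.

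The target condition is that all three points lie on an arc of angular length $<\pi$ after rotation. This is equivalent to the largest gap satisfying $\max_j \ell_j>\pi$. Indeed, if some gap exceeds $\pi$, we rotate so that the point at the counterclockwise end of that gap sits at angle $0$. The remaining points then lie at angles $0=\zeta_1\le\zeta_2\le\zeta_3=2\pi-\ell_{\max}<\pi$. Conversely, such a configuration leaves a gap of length $2\pi-\zeta_3>\pi$. So the lemma reduces to the following claim: for unit $\lambda_j$, $|\lambda_1+\lambda_2+\lambda_3|>1$ if and only if some gap $\ell_j$ is $>\pi$.

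To prove the claim, we write the points as $1$, $e^{i\ell_1}$, $e^{i(\ell_1+\ell_2)}$ and expand:
\[
|\lambda_1+\lambda_2+\lambda_3|^2=3+2\cos\ell_1+2\cos\ell_2+2\cos(\ell_1+\ell_2).
\]
Since $\ell_3=2\pi-\ell_1-\ell_2$, this equals $3+2(\cos\ell_1+\cos\ell_2+\cos\ell_3)$. Hence $|\sum\lambda_j|>1$ is equivalent to $\cos\ell_1+\cos\ell_2+\cos\ell_3>-1$. With $a_j=\ell_j/2\in[0,\pi]$ and $a_1+a_2+a_3=\pi$, the classical identity
\[
\cos 2a_1+\cos 2a_2+\cos 2a_3=-1-4\cos a_1\cos a_2\cos a_3
\]
turns the condition into $\cos a_1\cos a_2\cos a_3<0$. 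Because the $a_j$ sum to $\pi$, at most one of them can exceed $\pi/2$. The product is therefore negative exactly when one $a_j>\pi/2$, that is, when one gap $\ell_j>\pi$. This proves the claim and hence the lemma.

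The main point needing care is the degenerate cases, where a gap equals exactly $\pi$ or points coincide. In those cases some $\cos a_j=0$, so the product vanishes and $|\sum\lambda_j|=1$, which fails the strict inequality. This matches the strict bound $\zeta_3<\pi$ in the statement. Coincident points give zero-length gaps and are covered by allowing $\ell_j=0$. A short check of the trigonometric identity, via sum-to-product with $a_3=\pi-a_1-a_2$, completes the argument.
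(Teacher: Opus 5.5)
Your proof is correct, and its skeleton is the paper's. Both arguments reduce the normalized configuration $0=\zeta_1\le\zeta_2\le\zeta_3<\pi$ to the statement that the largest of the three arcs cut out by $\lambda_1,\lambda_2,\lambda_3$ exceeds $\pi$. Both then decide $|\lambda_1+\lambda_2+\lambda_3|>1$ by computing the square explicitly.

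The difference lies in that computation. The paper normalizes around the largest arc $\theta$, taking $\lambda_2=e^{i(\pi-\theta/2)}$, $\lambda_3=e^{i(\pi+\theta/2)}$, $\lambda_1=e^{i\zeta}$. Then $\lambda_2+\lambda_3=-2\cos\frac{\theta}{2}$ is real and $|\lambda_1+\lambda_2+\lambda_3|^2=1-4\cos\frac{\theta}{2}\bigl(\cos\zeta-\cos\frac{\theta}{2}\bigr)$. It then treats $\theta=\pi$, $\theta>\pi$ and $\theta<\pi$ separately, reading off the signs of $\cos\zeta$ and $\cos\zeta-\cos\frac{\theta}{2}$ from the constraints on $\zeta$.

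Your identity $|\lambda_1+\lambda_2+\lambda_3|^2=1-8\cos\frac{\ell_1}{2}\cos\frac{\ell_2}{2}\cos\frac{\ell_3}{2}$ is the same formula written symmetrically. With $\ell_1=\theta$ one has $\cos\frac{\ell_2}{2}\cos\frac{\ell_3}{2}=\frac12\bigl(\cos\zeta-\cos\frac{\theta}{2}\bigr)$. What your version buys is a sign analysis with no case split, in which the borderline $\max_j\ell_j=\pi$ is exactly the vanishing of one factor. The paper's version needs only the expansion of a square, at the cost of three cases.

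You also write out both directions of the equivalence between the normalized configuration and $\max_j\ell_j>\pi$. The paper spells out only the implication from $\theta>\pi$ to the normalized form, which is the one used in the proof of Theorem~\ref{T:blowup}.

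One small correction to your last paragraph: coincident points do not force some $\cos a_j=0$. For $\lambda_1=\lambda_2=\lambda_3$ the half-gaps are $0,0,\pi$ and $|\lambda_1+\lambda_2+\lambda_3|=3$. Only a gap of exactly $\pi$ makes a factor vanish. Since your main argument already allows $\ell_j\in[0,2\pi]$, that clause should simply be deleted.
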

\begin{proof}
Pick $\{\lambda_j\}_{j=1}^3$ so that $|\lambda_j|=1$ for each $j$.
By rotational symmetry, such a triple is essentially determined by a decomposition of $2\pi$ into three nonnegative angles. Let $\theta$ denote the largest of these angles. Then $\theta \in [\tfrac{2\pi}{3}, 2\pi]$.

Using this angle $\theta$, and after a suitable relabeling together with a rotation if necessary, we may write
\[
	\lambda_1 = e^{i\zeta}, \quad
	\lambda_2 = e^{i(\pi-\frac\theta2)}, \quad
	\lambda_3 = e^{i(\pi+\frac\theta2)}.
\]
Here, $\zeta$ satisfies
\[
	\max (\pi -\tfrac32 \theta,-\pi + \tfrac12 \theta) \le \zeta \le \min (-\pi + \tfrac32 \theta , \pi - \tfrac12 \theta).
\]

Let us claim that $|\lambda_1+\lambda_2+\lambda_3|>1$ holds if and only if $\theta>\pi$.
If $\theta=\pi$ then $|\lambda_1+\lambda_2+\lambda_3|=|\lambda_1|=1$.
If $\theta \in (\pi,2\pi]$ then $\lambda_2+\lambda_3= -2 \cos \frac{\theta}2>0$ and $\cos \zeta>0$ and hence
\[
	|\lambda_1+\lambda_2+\lambda_3|^2
	= (\cos \zeta-2 \cos \tfrac{\theta}2)^2 + \sin^2 \zeta > \cos^2 \zeta + \sin^2 \zeta =1.
\]
If $\theta \in [\frac23\pi,\pi)$ then $|\zeta|
\le -\pi +\frac32 \theta< \frac12\theta<\frac\pi2$ and hence $\cos \zeta > \cos \frac{\theta}2>0$. Therefore,
\[
	|\lambda_1+\lambda_2+\lambda_3|^2
	= 1-4 \cos \tfrac{\theta}2(  \cos \zeta - \cos \tfrac{\theta}2)<1.
\]
Hence, the claim is established.

In the case $\theta>\pi$, we relabel the indices so that the  $(\lambda_3,\lambda_1,\lambda_2)$ is new $(\lambda_1,\lambda_2,\lambda_3)$. Then, rotating them to obtain $\lambda_1=1$, $\lambda_2=e^{i(\zeta+\pi -\frac\theta2)}$, and $\lambda_3 = e^{i(2\pi-\theta)}$, i.e., $\zeta_1=0$, $\zeta_2=\zeta+\pi -\frac\theta2$, and $\zeta_3=2\pi-\theta$. Then, since $|\zeta|\le \pi -\frac12 \theta$ and $\theta>\pi$, one has $0 = \zeta_1 \le \zeta_2 \le \zeta_3 < \pi$.
\end{proof}

\subsection{Proof of Theorem \ref{T:blowup}}
Let us turn to the Proof of Theorem \ref{T:blowup}. The proof is divided into two parts. In the first part, we consider the case where two of $\lambda_j$ are the same.
\begin{proof}[Proof of Theorem \ref{T:blowup} -- Part 1]
Suppose that two of $\lambda_j$ are the same.
By relabeling and using rotation symmetry if necessary, we may suppose that
$\lambda_1= -1$ and $\lambda_2=\lambda_3=e^{i\zeta}\neq 1$.
Note that $e^{i\zeta}= 1$ is excluded since $|\lambda_1+\lambda_2+\lambda_3|=1$ in this case.

We use the ansatz
\[
    A_1 (\tau) = z_1 (1-\tau)^{-1-2i\psi}, \quad
    A_2 (\tau) = A_3(\tau)= z_2 (1-\tau)^{-1+i\psi}
\]
where $z_1 \in \C$, $z_2>0$, and $\psi \in \R$ are constants.

We shall choose the constants suitably.
The ODE system \eqref{E:g3ODE} reads as
\begin{equation}\label{E:BUpf1}
    (i+2\psi)z_1 = z_2^2, \quad
    i+\psi  = e^{i\zeta} z_1.
\end{equation}
Let us find suitable constants satisfy this relation.
By combining these two, we have
\begin{equation}\label{E:BUpf2}
    e^{i\zeta}z_2^2 =(i+2\psi)(i+\psi)
    =(2\psi^2-1) + 3i\psi.
\end{equation}
When $e^{i\zeta}=-1$, we see that $(z_1,z_2,\psi)=(-i,1,0)$ is a solution to \eqref{E:BUpf1}.
Suppose that $\sin \zeta \neq 0$.
By the comparison of the real and the imaginary parts, we see that
\[
    \tfrac{2\psi^2-1}{3\psi} = \cot \zeta.
\]
This is reduced to a quadratic equation with respect to $\psi$.
Noting that the sign of the $\psi$ is the same as $\sin \zeta$ in view of \eqref{E:BUpf2}, we choose a suitable branch to obtain
\[
    \psi = \tfrac{3\cot \zeta + \sqrt{9\cot^2 \zeta + 8}}{4}
    = \tfrac{3\cos \zeta + \sqrt{\cos^2 \zeta + 8}}{4\sin \zeta}
    = \mu \sin \zeta.
\]
where $\mu = \tfrac2{ \sqrt{\cos^2 \zeta + 8} - 3\cos \zeta} >0$.
Then, it is straightforward to see that
\[
    z_1 = e^{-i\zeta} (\mu \sin \zeta+i)
\]
and
\[
    z_2 = |2\psi^2-1 + 3i\psi|^{\frac12} =\sqrt{3|\psi|} |\cot \zeta+i|^{\frac12}
    =  \sqrt{3{\mu}}
\]
follow from \eqref{E:BUpf1} and \eqref{E:BUpf2}, respectively.
Hence, we obtain the desired formula.
\end{proof}
Let us consider the other case.
\begin{proof}[Proof of Theorem \ref{T:blowup} -- Part 2]
Suppose that $\lambda_j$ are mutually different.
By Lemma \ref{L:blowup}, we may suppose that $\lambda_j = e^{i\zeta_j}$ with $0=\zeta_1 < \zeta_2 < \zeta_3 <\pi$ without loss of generality.

We use the ansatz
\[
    A_1 (\tau) = z_1 (1-\tau)^{-1-i(\psi_2+\psi_3)}, \quad
    A_2 (\tau) = z_2 (1-\tau)^{-1+i\psi_2}, \quad
    A_3 (\tau) = z_3 (1-\tau)^{-1+i\psi_3},
\]
where $z_1 \in \C\setminus\{0\}$, $z_2>0$, $z_3>0$, and $\psi_j \in \R$ are constants.
The ODE system \eqref{E:g3ODE} reads as
\begin{equation}\label{E:gBUpf1}
    (i-(\psi_2+\psi_3)) z_1  = z_2z_3, \quad
    (i+\psi_2)z_2 = \lambda_2 \overline{z_1}z_3, \quad
    (i+\psi_3)z_3 = \lambda_3 \overline{z_1}z_2.
\end{equation}
As in the first part, we shall choose the constants suitably.
From the first and the third identities, one has
\begin{equation}\label{E:gBUpf2}
    \overline{\lambda_3}z_2^2
     = 1-\psi_3(\psi_2+\psi_3)  + i (\psi_2+2\psi_3).
\end{equation}
Similarly, we also have
\begin{equation}\label{E:gBUpf3}
    \overline{\lambda_2}z_3^2      = 1 - \psi_2(\psi_2+\psi_3) + i (2\psi_2 + \psi_3).
\end{equation}
%
%
%
By the comparison of the real and the imaginary parts of \eqref{E:gBUpf2}, we see that
\[
    \tfrac{\psi_3(\psi_2+\psi_3)-1}{\psi_2 + 2\psi_3} = \cot \zeta_2.
\]
This is written as
\begin{equation}\label{E:gBUpf4}
	\psi_2 = -(\psi_3- \cot \zeta_2) + \tfrac{1+ \cot^2 \zeta_2}{\psi_3-\cot \zeta_2}
\end{equation}
Similarly, \eqref{E:gBUpf3} yields
\begin{equation}\label{E:gBUpf5}
	\psi_3 = -(\psi_2- \cot \zeta_3) + \tfrac{1+ \cot^2 \zeta_3}{\psi_2-\cot \zeta_3}
\end{equation}
We look for a solution $(\psi_2,\psi_3)$ to \eqref{E:gBUpf4}--\eqref{E:gBUpf5} such that 
$\psi_2+2\psi_3<0$ and $2\psi_2+\psi_3<0$.
The last two conditions come from the comparison of the imaginary parts of \eqref{E:gBUpf1} and \eqref{E:gBUpf2}.

To find such a pair $(\psi_2,\psi_3)$, we first note that
the curves
\[
	C_1:= \{ 	y = -(x- \cot \zeta_2) + \tfrac{1+ \cot^2 \zeta_2}{x-\cot \zeta_2},\, x< \cos \zeta_2
\}
\]
and
\[
	C_2:=\{ 	x = -(y- \cot \zeta_3) + \tfrac{1+ \cot^2 \zeta_3}{y-\cot \zeta_3},\, y< \cos \zeta_3
\}
\]
has a unique intersection point.
This can be seen, for instance, from the fact that
\[
	(-\infty , \cot \zeta_2)\ni x \mapsto y_1(x):=-(x- \cot \zeta_2) + \tfrac{1+ \cot^2 \zeta_2}{x-\cot \zeta_2}
\]
is strictly monotone decreasing and
\[
	\lim_{x\to-\infty}
	y_1(x)=\infty,\quad
	\lim_{x\to\cot \zeta_2-0}
	y_1(x)=-\infty
\]
and that $C_2$ is written as a graph of 
a strictly monotone decreasing function $\R \ni x \mapsto y_2(x)$ such that
$\lim_{x\to-\infty}y_2(x) = \cot \zeta_3$.
Further, $C_1$ is below the line $y=-2x$ in the $xy$-plane.
Indeed, if there exists an intersection point then its $x$-coordinate is a solution to $x=y_1(x)$.
Since the equation is written as
$x^2+1 = 0$, the existence yields a contradiction.
Together with $y_1(x) \to -\infty$ as $x\to \cot \zeta_2-0$, we see that $C_1$ is below the line. 
Similarly, we see that $C_2$ is below the line $y=-\frac12 x$.
Thus, the unique intersection of $C_1$ and $C_2$ is below the two lines. This is our desired pair $(\psi_2,\psi_3)$.
\end{proof}

%

\section{Proof of Theorem \ref{T:NLS}}\label{S:NLS}
In this section, we sketch the proof of Theorem 1.3,
emphasizing the parts specific to the present problem.
The argument is based on the framework developed in \cite{Ma3,MaSeUr3} (see also \cite{MaSeUr2}).
To this end, we make notations. 
For $m\in \R\setminus\{0\}$, we introduce the scaled Fourier transform $\mathcal{F}_m$ by
\begin{equation*}
(\mathcal{F}_m\phi)(\xi):=\frac{m}{2\pi}\int_{\R^2}e^{-imy\cdot\xi}\phi(y)\,dy,
\quad \xi\in \R^2.
\end{equation*}
One can easily check that $\mathcal{F}_m$ is an isometry on $L^2(\R^2)$.
We denote $\F=\F_1$, for short.
One has $\F_{m}^{-1} = - \F_{-m}$.
For non-negative integers $s$ and $\sigma$, we define weighted Sobolev space by
$$
H^{s,\sigma}(\R^2; \C^3)=\{f\in L^2(\R^2; \C^3); \|f\|_{H^{s,\sigma}}<\infty\},
$$
where 
$$
\|f\|_{H^{s,\sigma}}^2=\sum_{|\alpha|\le s} \int_{\R^2} \Jbr{x}^{2\sigma} |\pa_x^\alpha f(x)|^2\, dx
$$
with $\Jbr{x}=\sqrt{1+|x|^2}$.

For $m\neq0$,
we let $U_m(t) = e^{i\frac{t}{2m}\Delta}= \F^{-1} e^{-i\frac{t}{2m}|\xi|^2}\F 
$ and
$J_m(t) = U_m(t) x U_m(-t)$.
Recall that
$$\epsilon := \sum_{j=1}^3\|u_{j,0}\|_{H^{0,2}}. $$

\subsection{Local existence}
Let us start with local existence. 
By the standard theory of local well-posedness for
the nonlinear Schr\"{o}dinger equation
(see \cite{Caz} for instance),
we have the following.

\begin{proposition}\label{prop:local}
    There exists $\widetilde{\epsilon}_0 > 0$ such that if
    $\epsilon \le \widetilde{\epsilon}_0$, then there exists
    a unique solution
    $(u_1, u_2,u_3) \in (C([-1, 1];L^2(\R^2)))^3$ to \eqref{E:3NLS} such that
    \[
         U_{m_j}(-t)u_j \in C([-1,1];H^{0,2}(\R))
    \]
    and
    \begin{equation}
        \max_{t \in [-1,1]}\sum_{j = 1}^3 
    \|U_{m_j}(-t)u_j(t)\|_{H^{0,2}}
     \le 2\epsilon.
    \end{equation}
\end{proposition}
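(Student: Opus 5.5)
We will prove Proposition \ref{prop:local} by a contraction argument in the profile variables. Set $v_j(t) = U_{m_j}(-t)u_j(t)$. By Duhamel's formula, \eqref{E:3NLS} with $u_j(0)=u_{j,0}$ becomes
\[
	v_j(t) = u_{j,0} - i\int_0^t U_{m_j}(-s) N_j(U(s)v(s))\,ds,
\]
where $N_1 = u_2u_3$, $N_2 = \overline{u_3}u_1$, $N_3=\overline{u_2}u_1$. We work in
\[
	X = \Bigl\{ v\in C([-1,1];H^{0,2})^3 ;\ \sup_{|t|\le1}\textstyle\sum_j\|v_j(t)\|_{H^{0,2}}\le 2\epsilon\Bigr\},
\]
with the metric $\sup_t\sum_j\|v_j-\tilde v_j\|_{H^{0,2}}$, or the $L^2$ metric if that is more convenient. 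Since $U_{m_j}(t)$ is unitary on $L^2$ and $U_{m_j}(t)\langle x\rangle^2 U_{m_j}(-t)$ is a second-order polynomial in $J_{m_j}(t)=x+\tfrac{it}{m_j}\nabla$, we have $\|v_j\|_{H^{0,2}}\sim \sum_{|\alpha|\le2}\|J_{m_j}^\alpha u_j\|_{L^2}$, with constants uniform for $|t|\le1$.

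The key step is the estimate of $\|J_{m_j}^\alpha N_j\|_{L^2}$ for $|\alpha|\le2$. We use the factorization $J_m(t)=M_m(t)\,\tfrac{it}{m}\nabla\, M_m(-t)$ with $M_m=e^{im|x|^2/(2t)}$. Under the resonance condition $m_1=m_2+m_3$, the phases match:
\[
	M_{m_1}(-t)(u_2u_3) = \bigl(M_{m_2}(-t)u_2\bigr)\bigl(M_{m_3}(-t)u_3\bigr),
\]
and similarly for $\overline{u_3}u_1$, since $m_1-m_3=m_2$. Hence $J_{m_j}$ acts on $N_j$ by the Leibniz rule, for example
\[
	J_{m_1}(u_2u_3)=\tfrac{m_2}{m_1}(J_{m_2}u_2)u_3+\tfrac{m_3}{m_1}u_2(J_{m_3}u_3),
\]
and likewise for second-order terms. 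Each product is then bounded by Hölder's inequality together with the 2D Gagliardo--Nirenberg inequality in the form $\|u\|_{L^\infty}\lesssim |t|^{-1}\|u\|_{L^2}^{0}\cdots$. More simply, for $|t|\le1$ we can avoid time decay altogether. Since $\|M_{m}(-t)u\|_{L^\infty}\lesssim \|u\|_{L^2}^{1/2}\|(t\nabla)^2 M(-t)u\|^{1/2}/|t|$ is singular at $t=0$, we instead bound $\|u_j\|_{L^\infty}=\|U_{m_j}(t)v_j\|_{L^\infty}\lesssim \|\widehat{v_j}\|_{L^1}\lesssim\|v_j\|_{H^{0,2}}$, using $\langle\xi\rangle^{-2}\in L^2(\R^2)$. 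This bound is uniform in $t$, and it is the way around the $t=0$ singularity.

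With this, the bilinear estimate
\[
	\|U_{m_j}(-t)N_j\|_{H^{0,2}}\lesssim \|v\|_{H^{0,2}}^2
\]
follows by placing the factor carrying two $J$'s in $L^2$ and the other factor in $L^\infty$. For the term with one $J$ on each factor, we need $\|J u\|_{L^4}$. Here we use $\|J_m u\|_{L^4}=\|U_m(t)(x v)\|_{L^4}\lesssim\|\widehat{xv}\|_{L^{4/3}}\lesssim\|xv\|_{H^{0,1}}$ by Hausdorff--Young, and the last norm is controlled by $\|v\|_{H^{0,2}}$. Integrating over $|s|\le1$ gives
\[
	\|\Phi(v)\|\le\epsilon+C\epsilon^2\le2\epsilon \quad\text{and}\quad \|\Phi(v)-\Phi(\tilde v)\|\le C\epsilon\|v-\tilde v\|
\]
once $\widetilde\epsilon_0$ is small. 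Banach's fixed point theorem then gives existence. Continuity in time follows from the Duhamel formula. Uniqueness in $C([-1,1];L^2)$ within this class follows from the same difference estimate, or from standard $L^2$ Strichartz theory \cite{Caz}.

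The main obstacle is the mixed term $(J u_2)(J u_3)$ and, more generally, making the $L^\infty$ and $L^4$ bounds uniform near $t=0$ without relying on dispersive decay. The Fourier-side embeddings $H^{0,2}\hookrightarrow\F L^1$ and $H^{0,1}\hookrightarrow \F L^{4/3}$ in two dimensions are what resolve this.
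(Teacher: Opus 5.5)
Your Leibniz identities under $m_1=m_2+m_3$ are correct, but the device you use to get around $t=0$ rests on a false embedding. Cauchy--Schwarz gives $\|\widehat v\|_{L^1}\le\|\langle\xi\rangle^{-2}\|_{L^2}\|\langle\xi\rangle^{2}\widehat v\|_{L^2}$. The last factor is the Sobolev norm $\|v\|_{H^{2,0}}$, not $\|v\|_{H^{0,2}}=\|\langle x\rangle^2 v\|_{L^2}$. Spatial weights on $v$ give smoothness of $\widehat v$, not decay, so $H^{0,2}$ controls $\|v\|_{L^1}$ (hence $\|\widehat v\|_{L^\infty}$), not $\|\widehat v\|_{L^1}$. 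Likewise $\|\widehat{xv}\|_{L^{4/3}}\lesssim\|xv\|_{H^{0,1}}$ is false; it would need $xv\in H^{s,0}$ with $s>\tfrac12$.

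The uniform bounds on $\|u_j(t)\|_{L^\infty}$ and $\|J_{m_j}u_j(t)\|_{L^4}$ therefore cannot hold. At $t=0$ one has $u_j=u_{j,0}$, and a function in $H^{0,2}(\R^2)$ such as a cut-off of $|x|^{-1/2}$ is neither bounded nor locally in $L^4$. In particular the bilinear estimate $\|U_{m_j}(-t)N_j\|_{H^{0,2}}\lesssim\|v\|_{H^{0,2}}^2$ fails; already $\|u_{2,0}u_{3,0}\|_{L^2}$ may be infinite. What is actually true for $t\neq0$ is $\|U_m(t)v\|_{L^\infty}\lesssim|t|^{-1}\|v\|_{L^1}\lesssim|t|^{-1}\|v\|_{H^{0,2}}$. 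Applied to the term $(J^2u_2)u_3$, this gives the divergent integral $\int_0^1 s^{-1}\,ds$. So an argument using only fixed-time estimates cannot close in $C([-1,1];H^{0,2})$.

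What is missing is the space-time integrability supplied by Strichartz estimates. This is what the paper's appeal to the standard $L^2$ theory in \cite{Caz} provides; the quadratic problem is $L^2$-subcritical in two dimensions. Your computation can be kept. Since $J_{m_j}(t)U_{m_j}(t-s)=U_{m_j}(t-s)J_{m_j}(s)$, each $J^\alpha_{m_j}u_j$ with $|\alpha|\le2$ satisfies a Duhamel formula whose forcing is a sum of products $(J^\beta_{m_k}u_k)(J^{\alpha-\beta}_{m_l}u_l)$, up to conjugations. Run the fixed point for $\{J^\alpha_{m_j}u_j\}_{|\alpha|\le2}$ in $L^\infty_tL^2_x\cap L^8_tL^{8/3}_x([-1,1]\times\R^2)$, using three ingredients:
the inhomogeneous Strichartz estimate with the dual exponents $(\tfrac43,\tfrac43)$ of the admissible pair $(4,4)$;
H\"older's inequality $\|fg\|_{L^{4/3}_{t,x}}\le\|f\|_{L^{8/3}_{t,x}}\|g\|_{L^{8/3}_{t,x}}$;
and H\"older in time on $[-1,1]$ to bound $L^{8/3}_tL^{8/3}_x$ by $L^8_tL^{8/3}_x$.
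This yields $\sum_j\|U_{m_j}(-t)u_j(t)-u_{j,0}\|_{H^{0,2}}\lesssim\epsilon^2$, which gives the bound $2\epsilon$ and continuity in $H^{0,2}$. The Strichartz norms replace the unavailable $L^\infty$ control near $t=0$.
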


\subsection{Global existence}

In what follows, we only consider positive time.
We shall next show the existence of global solution by
proving that $\sum_{j=1}^3\|U_{m_j}(-t)u_j\|_{H^{0,2}}$ remains uniformly bounded for all all $t>0$.

For $\nu > 0$ and $T > 1$, we define
\begin{equation}
    X_T := \sup_{t \in [1,T]}t^{-\nu\epsilon^2}
    \sum_{j = 1}^3
    \|U_{m_j}(-t)u_j(t)\|_{H^{0,2}}
\end{equation}
and
\begin{equation}
    Y_T := \sup_{t \in [1,T]}t\sum_{j=1}^3\|u_j(t)\|_{L^\infty}.
\end{equation}
The following proposition yields global existence.

\begin{proposition}\label{prop:global}
    Let $\widetilde{\epsilon_0}$ be given by Proposition
    \ref{prop:local}.
    Then there exist $\nu > 0$, $C > 0$, and $\epsilon_0 \in (0,\widetilde{\epsilon_0}]$
    such that if $\epsilon\le\epsilon_0$, then the unique solution to \eqref{E:3NLS}
    given in Proposition \ref{prop:local} exists globally
    in time for positive time direction and obeys the bound
    \begin{equation}\label{E:global}
        \sup_{T \ge 1}\left(X_T + Y_T\right) \le C\epsilon.
    \end{equation}
\end{proposition}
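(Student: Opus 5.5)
The argument is a continuity (bootstrap) argument on $[1,T]$, combined with the Katayama--Sakoda/Masaki--Segata--Uriya factorization, in which the Fourier-side profile is governed by the reduced ODE \eqref{E:3ODE}. The bootstrap hypothesis is
\[
X_T+Y_T\le 2C_0\epsilon,
\]
with $C_0$ dictated by Proposition \ref{prop:local}, and the goal is to improve it to $X_T+Y_T\le \tfrac32 C_0\epsilon$ once $\epsilon\le\epsilon_0$. By Proposition \ref{prop:local} and continuity of $T\mapsto X_T+Y_T$, this yields \eqref{E:global}, and the standard continuation argument then gives global existence.

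\emph{Weighted estimate.} The first step controls $X_T$. Since $m_1=m_2+m_3$, the operators $J_{m_j}$ satisfy a Leibniz rule on the nonlinearity, namely
\[
J_{m_1}(u_2u_3)=(J_{m_2}u_2)u_3+u_2(J_{m_3}u_3),
\]
and similarly for the conjugated terms. The same holds with $J^2$, which is the gauge-invariance structure that the resonance condition provides. Applying $J^\alpha_{m_j}$ with $|\alpha|\le2$ to \eqref{E:3NLS} and running an energy estimate gives
\[
\frac{d}{dt}\sum_j\|J_{m_j}^\alpha u_j\|_{L^2}\lesssim \sum_j\|u_j\|_{L^\infty}\sum_{|\beta|\le|\alpha|}\|J^\beta u\|_{L^2}\lesssim \frac{Y_T}{t}\,t^{\nu\epsilon^2}X_T .
\]
Integrating this bound yields $X_T\le C\epsilon+ C'\epsilon X_T\cdot(\nu\epsilon^2)^{-1}\epsilon$, and the last factor is $O(1)$ after choosing $\nu$ large relative to $C'$. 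More carefully, Gronwall gives $t^{CY_T\epsilon^{-1}\cdot\epsilon^2}$ growth, which is absorbed by taking $\nu\gtrsim C_0^2$. The $L^2$ mass itself is controlled by the conservation of $\sum_j\|u_j\|_{L^2}^2$ weighted as $\rho$, i.e.\ $\|u_1\|^2+\tfrac12\|u_2\|^2+\tfrac12\|u_3\|^2$.

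\emph{$L^\infty$ decay.} This is the main obstacle, and the key is the ODE structure. Write $u_j=U_{m_j}(t)v_j$ and factorize
\[
U_{m}(t)=M D \F_m M,
\]
where $M=e^{im|x|^2/2t}$ and $D$ is the dilation. Then
\[
u_j(t,x)=(it)^{-1}e^{im_j|x|^2/2t}\hat v_j(t,x/t)+O(t^{-1-\delta}\|v_j\|_{H^{0,2}}),
\]
and $\alpha_j(t,\xi):=\F_{m_j}[M v_j]$ satisfies
\[
i\partial_t\alpha=t^{-1}N(\alpha)+R,\qquad \|R\|_{L^\infty}\lesssim t^{-1-\delta+2\nu\epsilon^2}\epsilon^2 .
\]
Here $N$ is exactly the right-hand side of \eqref{E:3ODE}, because resonance kills the oscillating phases. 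In the variable $\tau=\log t$ this is \eqref{E:3ODE} plus an integrable perturbation. The conserved quantity $\rho=|\alpha_1|^2+\tfrac12|\alpha_2|^2+\tfrac12|\alpha_3|^2$ for the ODE, i.e.\ the bounded weak null condition discussed after Theorem \ref{T:NLS} (or Theorem \ref{T:sol_formula}), gives pointwise
\[
\frac{d}{dt}\rho(t,\xi)\lesssim |\alpha||R| ,
\]
so $\|\alpha(t)\|_{L^\infty}\le C\|\alpha(1)\|_{L^\infty}+C\epsilon^2\le C\epsilon$ uniformly in $t$. Note that the growth factor $t^{\nu\epsilon^2}$ from $X_T$ is beaten by $t^{-\delta}$ with $\delta=\tfrac12$ coming from the $H^{0,2}$ remainder estimate. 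Hence
\[
Y_T\le C\epsilon+C\epsilon^2 ,
\]
with a constant $C$ independent of the bootstrap constant, which closes the argument for $\epsilon_0$ small.
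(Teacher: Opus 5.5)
\paragraph{Overall assessment.} Your route is the paper's route: a bootstrap on $X_T$ and $Y_T$, an energy estimate for $J_{m_j}^\alpha u_j$ that uses $m_1=m_2+m_3$, and the profile together with the almost-conservation of $|w_1|^2+\frac12|w_2|^2+\frac12|w_3|^2$ for $Y_T$. There is, however, a genuine gap in closing $X_T$, which the paper's sketch shares.

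\paragraph{The gap in the $X_T$ step.} For a quadratic nonlinearity the energy inequality is $\frac{d}{dt}E(t)\le C t^{-1}Y_T\,E(t)$, where $E(t)=\sum_j\sum_{|\alpha|\le 2}\|J_{m_j}^\alpha u_j(t)\|_{L^2}$. Gronwall therefore gives $E(t)\le E(1)t^{CY_T}$ with $CY_T\approx CC_0\epsilon$, not $CC_0\epsilon^2$. The extra factor $\epsilon$ in both of your versions has no source. Equivalently, integrating $s^{-1+\nu\epsilon^2}$ gives $X_T\le C\epsilon+\frac{CC_0}{\nu\epsilon}X_T$, and no $\nu$ independent of $\epsilon$ makes that coefficient small. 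So with the weight $t^{-\nu\epsilon^2}$ the bootstrap does not close.

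The exponent $\nu\epsilon^2$ is the natural one for cubic problems in one dimension, the framework the paper borrows from. The paper's sketch uses the same weight and likewise just asserts closure ``by taking $\nu$ large'', so the defect sits in the normalization of $X_T$ rather than in your strategy. One should not expect a sharper argument to rescue it. The frequencies of the profile ODE vary with $\xi$ at size $O(\epsilon)$, so generically $\|\nabla_\xi w(t)\|_{L^2}$ grows like $\epsilon^2\log t$. By interpolation, this is incompatible (for instance at $\log t\sim\epsilon^{-3/2}$) with $\|w(t)\|_{H^2}\lesssim\epsilon t^{\nu\epsilon^2}$.

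\paragraph{The repair.} Weight $X_T$ by $t^{-\nu\epsilon}$ instead. Then $t^{-\nu\epsilon}E(t)\le E(1)\lesssim\epsilon$ as soon as $\nu\gtrsim C_0$. The factorization error $t^{-1/2+\nu\epsilon}$ still decays and the remainder $t^{-3/2+2\nu\epsilon}$ is still integrable, so the rest of your argument goes through.

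\paragraph{Smaller points.} First, the Leibniz rule needs mass weights:
$J_{m_1}(u_2u_3)=\frac{m_2}{m_1}(J_{m_2}u_2)u_3+\frac{m_3}{m_1}u_2(J_{m_3}u_3)$. As you wrote it, the $x$-terms do not match. The correct form follows from $J_m=M_m\frac{it}{m}\nabla M_m^{-1}$ with $M_m=e^{im|x|^2/2t}$ and $M_{m_1}=M_{m_2}M_{m_3}$.

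Second, the cross term $(J_{m_2}u_2)(J_{m_3}u_3)$ in $J^2$ is bounded by $\|u\|_{L^\infty}\|J^2u\|_{L^2}$ only via the Gagliardo--Nirenberg inequality $\|\nabla f\|_{L^4}^2\lesssim\|f\|_{L^\infty}\|\nabla^2 f\|_{L^2}$.

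Third, your variable $\alpha_j=\F_{m_j}[Mv_j]$ is the wrong one. It satisfies $u_j=MD\alpha_j$ exactly, and the equation is $i\partial_t\alpha_j+\frac{1}{2m_jt^2}\Delta_\xi\alpha_j=t^{-1}N_j(\alpha)$ (up to the normalization of the nonlinearity). The remainder then contains the linear term $-\frac{1}{2m_jt^2}\Delta_\xi\alpha_j$, which is neither $O(\epsilon^2)$ nor bounded in $L^\infty$ under $H^{0,2}$ control. Use $w_j=\F_{m_j}v_j=\F_{m_j}U_{m_j}(-t)u_j$, as the paper does; its remainder is genuinely quadratic.

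Finally, the factorization error $t^{-1/2}\|U_{m_j}(-t)u_j\|_{H^{0,2}}$ is not small near $t=1$. Your claim that the $Y_T$ bound has a constant independent of the bootstrap constant therefore needs the multiplicative bound $E(t)\le E(1)t^{CY_T}$, not the hypothesis on $X_T$. Otherwise you need two constants as in the paper, with $\widetilde C_0$ chosen after $C_0$ and $\nu$ after both.
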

It can be proved by showing that there exist $\nu > 0$, $C_0 > 0$, $\widetilde{C}_0 > 0$, and $\epsilon_0 > 0$ such that if
                $\epsilon \in (0,\epsilon_0]$ and
    \begin{equation}\label{eq:boot1}
        X_T \le C_0\epsilon, \quad Y_T \le \widetilde{C}_0\epsilon
    \end{equation}
    for some $T \ge 1$ then it holds that
    \begin{equation}\label{eq:boot2}
        X_T \le \tfrac{1}{2}C_0\epsilon, \quad Y_T \le \tfrac{1}{2}\widetilde{C}_0\epsilon
    \end{equation}
	for the same $T$.
	
	One can obtain the estimate on $X_T$ in \eqref{eq:boot2} by a standard energy-type estimate. The sharp $L^\infty$-decay estimate, obtained by the boundedness of $Y_T$, allows us to close the estimate without any loss of time growth rate. Note that refinement of the coefficient in \eqref{eq:boot2} is achieved by taking $\nu$ large compared with $C_0$ and $\widetilde{C}_0$.
	
	To obtain the estimate for $Y_T$, we introduce a new variable $$w_j:= \F_{m_j} U_{m_j}(-t)u_j.$$
	One sees that $t\|u_j(t)\|_{L^\infty}$ is comparable to $\|w_j(t)\|_{L^\infty}$ up to error $O(\eps t^{-\frac12+\nu \eps^2} X_T)$ (see \eqref{E:asymp_pre}, below).
	Further, thanks to
	the resonant condition $m_1=m_2+m_3$, we obtain
	\[
		i \partial_t w_j = \frac1{t} F_j(w_1,w_2,w_3) + r_j
	\]
	for $j=1,2,3$ in the sense of the associated integral equation, where $F_j$ is the nonlinearity of \eqref{E:3NLS} and $r_j$ is the error term which is order $O_{L^\infty} (\eps^2 t^{-\frac32 + \nu \varepsilon^2}X_T^2)$ on $t\in [1,T]$. 
After introducing the logarithmic time variable ($\tau=\log t$),
the leading part of the equation reduces to the ODE system \eqref{E:3ODE}.	
Hence, the conservation of $\rho$ for \eqref{E:3ODE} 
here reads as
\[
		\partial_t (|w_1|^2 + \tfrac12 |w_2|^2 + \tfrac12|w_3|^2) = O(|w||r| ).
\]
This is the crucial part of the argument.
By integration, we obtain the estimate on $Y_T$. The refinement of coefficient in \eqref{eq:boot2} can be obtained by choosing $\widetilde{C}_0$ large compared with $\varepsilon^{-1} Y_1$. 

\subsection{Asymptotic behavior}

The global bounds obtained above allow us
to identify the asymptotic profile.
We further introduce $\alpha = (\alpha_1,\alpha_2,\alpha_3)$ by
\begin{equation*}
    \alpha_j(\log t,\xi) := w_j(t,\xi).
\end{equation*}
This satisfies
	\begin{equation}\label{E:alpha_eq}
		i \partial_t \alpha_j =  F_j(\alpha_1,\alpha_2,\alpha_3) + \tilde{r}_j
	\end{equation}
	(in the integral form), where $\tilde{r}_j$ is the error term given by $\tilde{r}_j(\log t) = r_j(t)$.
With the global bound \eqref{E:global} in hand, we obtain the following estimates.
\begin{proposition}
	If $\eps \le \eps_0$ then
  it holds that 
  \begin{equation}\label{E:asymp_pre}
    \left\| u_j(t) - (it)^{-1}e^{im_j \frac{|x|^2}{2t}}\alpha_j\left(\log t, \frac{\cdot}{t}\right)\right\|_{L^p} \lesssim \epsilon t^{-\frac32 + \frac1p +\nu\epsilon^2}
  \end{equation}
  for $p \in [2,\infty]$, $t\ge1$ and
  \begin{equation}\label{E:error}
  	\| \tilde{r}(\tau) \|_{L^p} \le C_1 \eps^2 e^{-(\frac12 + \frac1p - \nu \eps^2)\tau}
  \end{equation}
  for $p \in [2,\infty]$, $\tau \ge 0$.
\end{proposition}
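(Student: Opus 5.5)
The two bounds have different sources, so I plan to treat them separately.

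For \eqref{E:asymp_pre}, I will start from the standard factorization of the linear propagator,
\[
U_{m}(t) = M_m(t) D_m(t) \F_m M_m(t),
\]
where $M_m(t)=e^{im\frac{|x|^2}{2t}}$ is the modulation and $D_m(t)\phi(x)=(it)^{-1}\phi(x/t)$ is the dilation. Writing $u_j = U_{m_j}(t)U_{m_j}(-t)u_j$, I get
\[
u_j(t)=M_{m_j}D_{m_j}w_j + M_{m_j}D_{m_j}\F_{m_j}(M_{m_j}-1)U_{m_j}(-t)u_j,
\]
and the first term is precisely $(it)^{-1}e^{im_j\frac{|x|^2}{2t}}\alpha_j(\log t,x/t)$. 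For the remainder I use three facts:
\begin{itemize}
\item $\|D_m(t)g\|_{L^p}= t^{-1+2/p}\|g\|_{L^p}$;
\item Hausdorff--Young together with Cauchy--Schwarz, in the form $\|\F g\|_{L^p}\lesssim \|g\|_{L^{p'}}\lesssim \|\Jbr{x}^2 g\|_{L^2}$ for $p\in[2,\infty]$;
\item the pointwise bound $|M_m(t)-1|\lesssim (|x|^2/t)^{\beta}$ with $\beta=\frac12$.
\end{itemize}
Choosing $\beta$ appropriately, the remainder is bounded by $t^{-1+2/p}\,t^{-\beta}\|U(-t)u\|_{H^{0,2}}$. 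The weight $|x|^{2\beta}$ is absorbed by $H^{0,2}$ since $2\beta\le 1$ leaves room for the $L^{p'}$ integrability. The $H^{0,2}$ norm is controlled by $X_T\,t^{\nu\eps^2}\lesssim \eps t^{\nu\eps^2}$ from \eqref{E:global}. I still need to check the power: $p=\infty$ with $\beta=\frac12$ gives $t^{-3/2}$, which matches, but $p=2$ would give $t^{-1/2}$ against the claimed $t^{-1}$. So at $p=2$ I will instead use the Plancherel isometry and $\beta=1$, that is $|M-1|\le |x|^2/(2t)$, which gives $t^{-1}\|\,|x|^2U(-t)u\|_{L^2}$. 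Interpolating between the $p=2$ and $p=\infty$ cases then yields $t^{-\frac32+\frac1p+\nu\eps^2}$.

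For \eqref{E:error}, the route is to make $r_j$ explicit. Differentiating $w_j$ and using the factorization, the nonlinear term becomes
\[
\F_{m_j}U_{m_j}(-t)F_j(u)=\F_{m_j}M_{m_j}^{-1}\F_{m_j}^{-1}D_{m_j}^{-1}M_{m_j}^{-1}F_j(u).
\]
Substituting $u=MD\F M\,U(-t)u$ and using the resonance $m_1=m_2+m_3$, the quadratic phases cancel. This leaves $t^{-1}F_j(w)$ plus commutator terms involving $(M^{\pm1}-1)$. Each such term is estimated as above: one factor in $L^\infty$ using the $Y_T$ bound, the other in $H^{0,2}$ with a gain of $t^{-1/2}$ or $t^{-1}$. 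I will therefore aim for
\[
\|r(t)\|_{L^p}\lesssim \eps^2 t^{-\frac32-\frac1p+\nu\eps^2}
\]
(note the sign of $1/p$ is flipped relative to the previous bound, because of the Fourier side). Converting to $\tau=\log t$ includes the Jacobian $\partial_\tau = t\partial_t$, so $\tilde r(\tau)=t\,r(t)$. This gives $e^{-(\frac12+\frac1p-\nu\eps^2)\tau}$, matching \eqref{E:error}.

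I expect the main obstacle to be this error estimate. The nonlinearity contains conjugates, for example $\overline{u_3}u_1$, so one must track that $\overline{M_{m_3}}M_{m_1}=M_{m_2}$, which is exactly the resonance condition. One also has to ensure that the product of the two remainder factors lands in a space where $\F_m^{-1}$ maps back with the stated $L^p$ exponent. I would handle $p=2$ via Plancherel and $p=\infty$ via $L^1$ bounds obtained from the weights, and then interpolate.
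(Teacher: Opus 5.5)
Your overall architecture is the standard argument, and it is what the paper relies on; the paper itself gives no details beyond invoking \eqref{E:global} and the framework of \cite{Ma3,MaSeUr3}. The ingredients are: the factorization $U_m=M_mD_m\F_mM_m$, isolating $M_{m_j}D_{m_j}w_j$, cancelling the quadratic phases via $m_1=m_2+m_3$, bounding commutators through $|M^{\pm1}-1|$, Plancherel with $|M-1|\le |m||x|^2/(2t)$ at $p=2$, and interpolation. Your Jacobian $\tilde r(\tau)=t\,r(t)$ is also correct, and it is what makes the exponents in \eqref{E:error} come out right.

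\textbf{The gap is at the $p=\infty$ endpoint.} Both bounds depend on it: you interpolate from it, and you reuse it for the error term. You assert $\|(M-1)g\|_{L^1}\le t^{-1/2}\||x|g\|_{L^1}\lesssim t^{-1/2}\|g\|_{H^{0,2}}$ ``since $2\beta\le 1$ leaves room''. It does not.
\begin{itemize}
\item Cauchy--Schwarz would require $\Jbr{x}^{-1}\in L^2(\R^2)$, which fails logarithmically.
\item The bound $\||x|g\|_{L^1}\lesssim\|g\|_{H^{0,2}}$ is in fact false in two dimensions: take $g=\Jbr{x}^{-3}(\log(2+|x|))^{-1}$.
\item No single power weight works for any $p\in(2,\infty]$. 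The required exponent is $\beta=\frac12+\frac1p$, and H\"older then needs $|x|^{2\beta-2}\in L^{2p/(p-2)}$ near infinity, which is exactly critical.
\item Taking $\beta<\frac12$ loses a factor $t^{\frac12-\beta}$. This cannot be absorbed into the $t^{\nu\eps^2}$, which is already used up by $X_T$.
\end{itemize}

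\textbf{The fix} is to split at $|x|=\sqrt t$:
\[
\|(M-1)g\|_{L^1}\le \frac{|m|}{2t}\int_{|x|<\sqrt t}|x|^2|g|\,dx+2\int_{|x|>\sqrt t}|g|\,dx\lesssim t^{-\frac12}\||x|^2 g\|_{L^2}.
\]
Equivalently, apply $\|h\|_{L^1}\lesssim \|h\|_{L^2}^{1/2}\||x|^2h\|_{L^2}^{1/2}$ to $h=(M-1)g$. This gives the sharp $t^{-1/2}$ with no logarithmic loss.

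The same device is needed in the error term, for $\|\F_{m_j}(M_{m_j}^{-1}-1)\F_{m_j}^{-1}(\cdot)\|_{L^\infty}$ and for $\|v_k-w_k\|_{L^\infty}$, where $v_k=\F_{m_k}M_{m_k}U_{m_k}(-t)u_k$. Note that $u_k=M_{m_k}D_{m_k}v_k$ exactly, so $\|v_k\|_{L^\infty}=t\|u_k\|_{L^\infty}\le Y_T$.

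One further detail: to keep a single factor $t^{\nu\eps^2}$ in $\|\Delta(v_2v_3)\|_{L^2}$ (or $\|\Delta(\overline{v_3}v_1)\|_{L^2}$), do not put both factors in $H^2$. Handle the cross term $\nabla v_k\cdot\nabla v_l$ with $\|\nabla v\|_{L^4}^2\lesssim\|v\|_{L^\infty}\|\nabla^2 v\|_{L^2}$. With these corrections your plan proves the proposition.
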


Now, our final task is to construct a family $A=(A_1,A_2,A_3)$ of solutions to \eqref{E:3ODE} such that $|A(\tau)-\alpha(\tau)| \to 0$ as $\tau\to\infty$.

If the nonlinearity is removed by the gauge transformation, the construction is simple.
However, this is not the case.
Hence, we use a different argument.
We let
\[  
	||| A||| := \| e^{(1-\nu \eps^2)\tau} A \|_{L^\infty_\tau ([0,\infty); L^2(\R^2_\xi))}
	+ \| e^{(\frac12-\nu \eps^2)\tau} A \|_{L^\infty_{\tau,\xi} ([0,\infty)\times \R^2))}
\]
and introduce a complete metric space 
\[
	Z:=\left\{ A=(A_1,A_2,A_3) \in (C^\infty_\tau([1,\infty); L^2_\xi \cap L^\infty_\xi(\R^2)))^3 \ ;\ ||| A-\alpha||| \le C_2 \eps^2 \right\}
\]
with metric $d(A,B) = |||A-B|||$.
Note that $\alpha \in Z$ and hence $Z$ is non-empty.
We see that
\[
	Z\ni A \mapsto \Phi(A)(\tau) :=\alpha(\tau) -i\int_\tau^\infty \tilde{r}(\sigma)d\sigma +i \int_\tau^\infty (F (A)(\sigma)-F(\alpha) (\sigma)) d\sigma
\]
is a contraction map on $Z$.
Indeed, 
using the bound $\|\alpha\|_{L^\infty_{\tau,\xi}} \le Y_\infty \le \frac12 \widetilde{C}_0 \eps$ and \eqref{E:error}, we obtain
\[
	||| \Phi(A) - \alpha ||| \lesssim C_1 \eps^2 + (\tilde{C}_0\eps + C_2 \eps^2
 ) C_2 \eps^2
\]
for any $A \in Z$ and
\[
	d (\Phi(A_1) , \Phi(A_2) )
	\lesssim (\tilde{C}_0\eps )d(A_1,A_2).
\]
Hence, by taking $C_2$ large compared with $C_1$ and letting $\eps_0$ small, we see that $\Phi$ is a contraction map on $Z$.

Let $A\in Z$ be the fixed point of $\Phi$.
Then, in view of \eqref{E:alpha_eq}, we have
\[
	A(\tau_2)-A(\tau_1) = \left(\alpha(\tau_2)-\alpha(\tau_1) + i \int^{\tau_2}_{\tau_1} (F(\alpha)+\tilde{r})d\sigma \right) -i \int^{\tau_2}_{\tau_1} F(A)
	d\sigma
	= -i \int^{\tau_2}_{\tau_1} F(A)
	d\sigma
\]
for any $ \tau_1, \tau_2 \ge 0 $. This shows that $A$ is a solution to \eqref{E:3ODE}.
The property $|||A-\alpha||| \le C_2 \eps^2$ reads as
\[
	\left\|  (it)^{-1}e^{im_j \frac{|x|^2}{2t}}\alpha_j\left(\log t, \frac{\cdot}{t}\right) - (it)^{-1}e^{im_j \frac{|x|^2}{2t}}A_j\left(\log t, \frac{\cdot}{t}\right)\right\|_{L^p} \le C_2 \eps^2 t^{-\frac32 + \frac1p +\nu\epsilon^2}.
\]
Hence, together with \eqref{E:asymp_pre}, we obtain \eqref{E:asymp}.
Theorem \ref{T:sol_formula} now provides an explicit description
of the reduced dynamics appearing in the asymptotic profile.

\subsection*{Acknowledgements} 
S.M. was  supported by JSPS KAKENHI Grant Numbers JP23K20803, JP23K20805, and JP24K00529.
K.U. was  supported by JSPS KAKENHI Grant Numbers JP24K00529 and JP24K06805.


\begin{bibdiv}
\begin{biblist}

\bib{ABDP}{article}{
   author={Armstrong, J.~A.},
   author={Bloembergen, N.},
   author={Ducuing, J.},
   author={Pershan, P.~S.},
   title={Interactions between light waves in a nonlinear dielectric},
   journal={Phys. Rev.},
   volume={127},
   date={1962},
   number={6},
   pages={1918--1939},
   doi={10.1103/PhysRev.127.1918},
}


\bib{BFBook}{book}{
   author={Byrd, Paul F.},
   author={Friedman, Morris D.},
   title={Handbook of elliptic integrals for engineers and scientists},
   series={Die Grundlehren der mathematischen Wissenschaften, Band 67},
   note={Second edition, revised},
   publisher={Springer-Verlag, New York-Heidelberg},
   date={1971},
   pages={xvi+358},
   review={\MR{0277773}},
}

\bib{Caz}{book}{
   author={Cazenave, Thierry},
   title={Semilinear Schr\"{o}dinger equations},
   series={Courant Lecture Notes in Mathematics},
   volume={10},
   publisher={New York University, Courant Institute of Mathematical
   Sciences, New York; American Mathematical Society, Providence, RI},
   date={2003},
   pages={xiv+323},
   isbn={0-8218-3399-5},
   review={\MR{2002047}},
}
%
%

\bib{GMXZ}{article}{
   author={Gao, Chuanwei},
   author={Meng, Fanfei},
   author={Xu, Chengbin},
   author={Zheng, Jiqiang},
   title={Scattering theory for quadratic nonlinear Schr\"odinger system in
   dimension six},
   journal={J. Math. Anal. Appl.},
   volume={541},
   date={2025},
   number={1},
   pages={Paper No. 128708, 42},
   issn={0022-247X},
   review={\MR{4781063}},
   doi={10.1016/j.jmaa.2024.128708},
}

\bib{GiOz}{article}{
   author={Ginibre, J.},
   author={Ozawa, T.},
   title={Long range scattering for nonlinear Schr\"{o}dinger and Hartree
   equations in space dimension $n\geq 2$},
   journal={Comm. Math. Phys.},
   volume={151},
   date={1993},
   number={3},
   pages={619--645},
   issn={0010-3616},
   review={\MR{1207269}},
}

\bib{Ham2018}{misc}{
   author={Hamano, Masaru},
	title={Global dynamics below the ground state for the quadratic Schr\"odinger system in 5$d$},
	year={2018},
	 status={available as arXiv:1805.12245}
}

\bib{HIN}{article}{
   author={Hamano, Masaru},
   author={Inui, Takahisa},
   author={Nishimura, Kuranosuke},
   title={Scattering for the quadratic nonlinear Schr\"odinger system in
   ${\bf R}^5$ without mass-resonance condition},
   journal={Funkcial. Ekvac.},
   volume={64},
   date={2021},
   number={3},
   pages={261--291},
   issn={0532-8721},
   review={\MR{4360610}},
   doi={10.1619/fesi.64.261},
}

\bib{HaMa}{article}{
   author={Hamano, Masaru},
   author={Masaki, Satoshi},
   title={A sharp scattering threshold level for mass-subcritical nonlinear
   Schr\"odinger system},
   journal={Discrete Contin. Dyn. Syst.},
   volume={41},
   date={2021},
   number={3},
   pages={1415--1447},
   issn={1078-0947},
   review={\MR{4201846}},
   doi={10.3934/dcds.2020323},
}

\bib{HLN2011}{article}{
   author={Hayashi, Nakao},
   author={Li, Chunhua},
   author={Naumkin, Pavel I.},
   title={On a system of nonlinear Schr\"odinger equations in 2D},
   journal={Differential Integral Equations},
   volume={24},
   date={2011},
   number={5-6},
   pages={417--434},
   issn={0893-4983},
   review={\MR{2809614}},
}

\bib{HLN2012}{article}{
   author={Hayashi, Nakao},
   author={Li, Chunhua},
   author={Naumkin, Pavel I.},
   title={Modified wave operator for a system of nonlinear Schr\"odinger
   equations in 2d},
   journal={Comm. Partial Differential Equations},
   volume={37},
   date={2012},
   number={6},
   pages={947--968},
   issn={0360-5302},
   review={\MR{2924463}},
   doi={10.1080/03605302.2012.668256},
}

\bib{HLN2016}{article}{
   author={Hayashi, Nakao},
   author={Li, Chunhua},
   author={Naumkin, Pavel I.},
   title={Nonlinear Schr\"odinger systems in 2d with nondecaying final data},
   journal={J. Differential Equations},
   volume={260},
   date={2016},
   number={2},
   pages={1472--1495},
   issn={0022-0396},
   review={\MR{3419736}},
   doi={10.1016/j.jde.2015.09.033},
}

\bib{HaNa}{article}{
   author={Hayashi, Nakao},
   author={Naumkin, Pavel I.},
   title={Asymptotics for large time of solutions to the nonlinear
   Schr\"{o}dinger and Hartree equations},
   journal={Amer. J. Math.},
   volume={120},
   date={1998},
   number={2},
   pages={369--389},
   issn={0002-9327},
   review={\MR{1613646}},
}

\bib{HaNa2013}{article}{
   author={Hayashi, Nakao},
   author={Naumkin, Pavel I.},
   title={A system of quadratic nonlinear Klein-Gordon equations in 2d},
   journal={J. Differential Equations},
   volume={254},
   date={2013},
   number={8},
   pages={3615--3646},
   issn={0022-0396},
   review={\MR{3020890}},
   doi={10.1016/j.jde.2013.01.035},
}



\bib{HOT}{article}{
   author={Hayashi, Nakao},
   author={Ozawa, Tohru},
   author={Tanaka, Kazunaga},
   title={On a system of nonlinear Schr\"odinger equations with quadratic
   interaction},
   journal={Ann. Inst. H. Poincar\'e{} C Anal. Non Lin\'eaire},
   volume={30},
   date={2013},
   number={4},
   pages={661--690},
   issn={0294-1449},
   review={\MR{3082479}},
   doi={10.1016/j.anihpc.2012.10.007},
}

\bib{IKN2019}{article}{
   author={Inui, Takahisa},
   author={Kishimoto, Nobu},
   author={Nishimura, Kuranosuke},
   title={Scattering for a mass critical NLS system below the ground state
   with and without mass-resonance condition},
   journal={Discrete Contin. Dyn. Syst.},
   volume={39},
   date={2019},
   number={11},
   pages={6299--6353},
   issn={1078-0947},
   review={\MR{4026982}},
   doi={10.3934/dcds.2019275},
}

\bib{IKN2020}{article}{
   author={Inui, Takahisa},
   author={Kishimoto, Nobu},
   author={Nishimura, Kuranosuke},
   title={Blow-up of the radially symmetric solutions for the quadratic
   nonlinear Schr\"odinger system without mass-resonance},
   journal={Nonlinear Anal.},
   volume={198},
   date={2020},
   pages={111895, 10},
   issn={0362-546X},
   review={\MR{4090442}},
   doi={10.1016/j.na.2020.111895},
}

\bib{IOU}{article}{
   author={Iwabuchi, Tsukasa},
   author={Ogawa, Takayoshi},
   author={Uriya, Kota},
   title={Ill-posedness for a system of quadratic nonlinear Schr\"odinger
   equations in two dimensions},
   journal={J. Funct. Anal.},
   volume={271},
   date={2016},
   number={1},
   pages={136--163},
   issn={0022-1236},
   review={\MR{3494245}},
   doi={10.1016/j.jfa.2016.04.017},
}


\bib{KaSa}{article}{
   author={Katayama, Soichiro},
   author={Sakoda, Daisuke},
   title={Asymptotic behavior for a class of derivative nonlinear
   Schr\"{o}dinger systems},
   journal={Partial Differ. Equ. Appl.},
   volume={1},
   date={2020},
   number={3},
   pages={Paper No. 12, 41},
   issn={2662-2963},
   review={\MR{4336288}},
   doi={10.1007/s42985-020-00012-4},
}

\bib{KaMa}{misc}{
   author={Katayama, Soichiro},
   author={Masaki, Satoshi},
   title={Asymptotic analysis without structural condition for systems of critical derivative nonlinear Schr\"odinger equations},
   status={preprint}
}


%



%





\bib{Ma2}{article}{
   author={Masaki, Satoshi},
   title={Partial classification of the large-time behavior of solutions to
   cubic nonlinear Schr\"odinger systems},
   journal={Partial Differ. Equ. Appl.},
   volume={6},
   date={2025},
   number={4},
   pages={Paper No. 27, 52},
   issn={2662-2963},
   review={\MR{4917200}},
   doi={10.1007/s42985-025-00334-1},
}
\bib{Ma3}{misc}{
	author={Masaki, Satoshi},
   title={Global existence and large-time behavior of solutions to cubic nonlinear Schr\"{o}dinger systems without coercive conserved quantity},
   year={2024},
   	status={available as arXiv:2412.00413},
}



%
\bib{MaSeUr2}{article}{
   author={Masaki, Satoshi},
   author={Segata, Jun-Ichi},
   author={Uriya, Kota},
   title={Asymptotic behavior in time of solution to system of cubic
   nonlinear Schr\"odinger equations in one space dimension},
   conference={
      title={Mathematical physics and its interactions},
   },
   book={
      series={Springer Proc. Math. Stat.},
      volume={451},
      publisher={Springer, Singapore},
   },
   isbn={978-981-97-0363-0},
   isbn={978-981-97-0364-7},
   date={[2024] \copyright 2024},
   pages={119--180},
   review={\MR{4775002}},
}

\bib{MaSeUr3}{article}{
  author={Masaki, Satoshi},
   author={Segata, Jun-Ichi},
   author={Uriya, Kota},
	title={Non-polynomial conserved quantities for ODE systems and their application to the long-time behavior of solutions to cubic NLS systems},
    journal={to appear in Trans. Amer. Math. Soc.},
}

%




\bib{NoPa}{article}{
   author={Noguera, Norman},
   author={Pastor, Ademir},
   title={On the dynamics of a quadratic Schr\"odinger system in dimension
   $n = 5$},
   journal={Dyn. Partial Differ. Equ.},
   volume={17},
   date={2020},
   number={1},
   pages={1--17},
   issn={1548-159X},
   review={\MR{4071981}},
   doi={10.4310/DPDE.2020.v17.n1.a1},
}

\bib{OU2013}{article}{
   author={Ogawa, Takayoshi},
   author={Uriya, Kota},
   title={Asymptotic behavior of solutions to a quadratic nonlinear
   Schr\"odinger system with mass resonance},
   conference={
      title={Harmonic analysis and nonlinear partial differential equations},
   },
   book={
      series={RIMS K\^oky\^uroku Bessatsu},
      volume={B42},
      publisher={Res. Inst. Math. Sci. (RIMS), Kyoto},
   },
   date={2013},
   pages={153--170},
   review={\MR{3220153}},
}

\bib{OU2015}{article}{
   author={Ogawa, Takayoshi},
   author={Uriya, Kota},
   title={Final state problem for a quadratic nonlinear Schr\"odinger system
   in two space dimensions with mass resonance},
   journal={J. Differential Equations},
   volume={258},
   date={2015},
   number={2},
   pages={483--503},
   issn={0022-0396},
   review={\MR{3274766}},
   doi={10.1016/j.jde.2014.09.022},
}
\bib{Oz}{article}{
   author={Ozawa, Tohru},
   title={Long range scattering for nonlinear Schr\"{o}dinger equations in one
   space dimension},
   journal={Comm. Math. Phys.},
   volume={139},
   date={1991},
   number={3},
   pages={479--493},
   issn={0010-3616},
   review={\MR{1121130}},
}
	



%
%
%
%


\bib{Ur16}{article}{
   author={Uriya, Kota},
   title={Final state problem for a system of nonlinear Schr\"odinger
   equations with three wave interaction},
   journal={J. Evol. Equ.},
   volume={16},
   date={2016},
   number={1},
   pages={173--191},
   issn={1424-3199},
   review={\MR{3466217}},
   doi={10.1007/s00028-015-0297-z},
}

\bib{Ur}{article}{
   author={Uriya, Kota},
   title={Final state problem for systems of cubic nonlinear Schr\"{o}dinger
   equations in one dimension},
   journal={Ann. Henri Poincar\'{e}},
   volume={18},
   date={2017},
   number={7},
   pages={2523--2542},
   issn={1424-0637},
   review={\MR{3665222}},
}



\end{biblist}
\end{bibdiv}
\end{document}